\newtheorem{theorem}{Theorem}[section]
\newtheorem{corollary}[theorem]{Corollary}
\newtheorem{proposition}[theorem]{Proposition}
\newtheorem{lemma}[theorem]{Lemma}
\theoremstyle{definition}
\newtheorem{definition}[theorem]{Definition}
\theoremstyle{remark}
\newtheorem*{remark}{Remark}
\begin{document}

\title{Abelian varieties with quaternion and complex multiplication}

\author{Dominik Ufer}

\date{}

\maketitle

\begin{abstract}
In this paper we study abelian varieties $A$ which correspond to CM points
in the coarse moduli space of principally polarized abelian varieties with
multiplication by a maximal order in a quaternion algebra over a totally real
number field. These are abelian varieties of even dimension with quaternion and 
complex multiplication. We describe them explicitly via isogenies to
products of abelian varieties of smaller dimension together with estimates on
the degree.
\end{abstract}

% Mathematics Subject Classification 2000:  14K22, 11G15,  11G10.

\section{Introduction} \label{intro}
We consider the coarse moduli space $ {\mathcal M}_D $ of principally polarized abelian 
surfaces $A$ over ${\mathbb C}$ with multiplication by ${\mathcal O}_D$, a maximal order 
in an indefinite quaternion algebra $D$ over ${\mathbb Q}$ (see \S \ref{qmcm}). We say 
such an abelian surface $A$ has quaternion multiplication (QM) by $D$. In fact, the moduli 
space ${\mathcal M}_D$ defines an algebraic curve $C$ over a number field, a so-called 
Shimura curve. Shimura curves are a natural generalization of modular curves. They play 
similar roles in number theory. For example, in analogy to CM points on classical modular
curves, one can use CM points on $C$ to construct class fields (see
\cite{Shimura:ClassFields}). These points on ${\mathcal M}_D$ correspond to abelian surfaces with even more endomorphisms, namely abelian
surfaces with quaternion multiplication and complex multiplication
(QM+CM). In this paper, we give a description of abelian surfaces of type QM+CM
over $F$ which is not restricted to $F={\mathbb C}$, but also applies to the
case of an arbitrary field of definition.\par 
First, we give an account of our description of abelian surfaces with QM+CM.
More details along with a generalization to higher dimensions can be found in
Section \ref{qmcm}. It follows from the classification of possible endomorphism
algebras of abelian varieties that an abelian surface with QM+CM is isogenous to
a product $\tilde A^2$, where $\tilde A$ is an elliptic curve with complex
multiplication. We construct an explicit isogeny
$\psi\colon  A\rightarrow \tilde
A_e \times \tilde A_{\overline e}$, where $\tilde A_e$ and  $\tilde A_{\overline
e}$
are elliptic curves isogenous to $\tilde A$. The construction goes as follows.
The center of ${\rm End}(A)$ is isomorphic to an order ${\mathcal O}_{L,c}$ in
an imaginary quadratic field $L$. We choose an embedding $\iota\colon {\mathcal
O}_{L,c}\hookrightarrow {\mathcal O}_D$ which corresponds canonically to an
idempotent $e\in D\otimes_{\mathbb Q} L$ (see \S \ref{splitting}). Essentially,
the idempotent defines the isogeny. In Section \ref{qmcm.2}, we show that the
elliptic curves $\tilde A_e$ and $\tilde A_{\overline e}$ have isomorphic
endomorphism rings. This is only shown in the case of an abelian surface $A$.
The proof goes as follows. We show that there exist two isogenies $u_i\colon
\tilde A_e \rightarrow \tilde A_{\overline e}$ of relatively prime degree. Then,
we use a result of Kohel (see \cite{Kohel:Diss}) to show that $\tilde A_e$ and
$\tilde A_{\overline e}$ have isomorphic endomorphism rings.\par
The idea for the construction of $\psi\colon A\rightarrow \tilde A_e \times \tilde A_{\overline e}$ goes back to
\cite{Mori:Expansion}, where this isogeny is constructed in the case $F=\mathbb{C}$. We generalize this to arbitrary field 
of definition $F$ and arbitrary even dimension $g$ of $A$.\par
In Section \ref{qmcm.2} we furthermore give another description of abelian
surfaces $A$ of type QM+CM using \cite{Kani:CMProd}. It is not generalizable to
higher dimensions. The description is closely linked to the construction of the
isogeny in Section \ref{qmcm}. Morally, $A$ is uniquely determined by a
suitable choice of integer $c$ and isomorphism class of a CM-elliptic curve $E'$.
This follows from the fact (\cite{Kani:CMProd}) that $A$ is isomorphic to a
product $E\times E'$ for suitable CM elliptic curves $E, E'$. For the right
choice of optimal embedding $\iota\colon {\mathcal O}_{L,c}\hookrightarrow
{\mathcal O}_D$ the elliptic curve $E'$ is isomorphic to the elliptic curve
$\tilde A_{\overline e}$ as above.\par
In \cite{BG}, Bayer and Gu{\`a}rdia use a different approach to construct fake
elliptic curves in the case $F=\mathbb{C}$. This leads to a different
description of fake elliptic curves, namely as Jacobians of hyperelliptic
curves. They give explicit equations using $\theta$-functions for those curves,
if the  abelian varieties correspond to certain CM points. This method only
works for dimension $g=2$ as in this case every principally polarized abelian
variety is in fact a Jacobian.\par
One application for the structure theorem for fake elliptic curves (Theorem \ref{thm:Kani}) to deformation theory is given in \cite{Diss}. There, 
we are interested in the $p$-adic geometry of CM points on the Shimura curve $C$, describing principally polarized
abelian surfaces of type QM. Details can also be found in Section \ref{qmcm.2}.
\section{Splitting of Quaternion Algebras and the Corresponding Idempotent}\label{splitting}
We start by giving the necessary background on quaternion algebras, especially on splitting fields. We then define the
idempotent corresponding to this splitting. This idempotent will be used in Section \ref{qmcm} to define the isogeny 
$\psi\colon  A\rightarrow \tilde A_e \times \tilde A_{\overline e}$.\par
First we fix some notation. Let $D$ be a quaternion algebra over a totally real field $K$, that is a central simple
algebra over $K$ of rank $4$ containing $K$. Let $h\mapsto \overline h$ denote the the standard involution on $D$. We denote by
$\Sigma$ the set of places of $K$ where $D$ is ramified. In other words $v\in\Sigma$ if and only if the quaternion
algebra $D\otimes_{K} K_v$ over the localization $K_v$ of $K$ at $v$ is a division algebra. We assume that $D$ is totally indefinite, that is $\Sigma$
contains no infinite place. Then $D$ is a possible endomorphism algebra of an abelian variety (see \cite[21
Theorem 2]{Mumford:AV}).\par
We study abelian varieties $A$ with even more endomorphisms, namely those which also have complex multiplication. Due to the aforementioned structure theorem for
endomorphism algebras, there exists an embedding $\iota\colon L\hookrightarrow D$ of a totally imaginary field $L$ with $[L:K]=2$ such 
that $D\otimes_{\mathbb Q} L\subset{\rm End}^0(A)$ (see \ref{pro:QMCM}).  By \cite[Theorem I.2.8]{Vigneras:Quaternions} this field $L$ is a \emph{splitting field} of $D$,
that is a field $L/K$ with $[L:K]=2$ such that $D\otimes_{\mathbb Q} L\simeq {\rm M}_2(L)$ holds or, equivalently (\cite[Theorem III.3.8]{Vigneras:Quaternions}), a quadratic
extensions $L$ of $K$ in which no prime ${\mathfrak p}$ corresponding to a place in $\Sigma$ is totally split.\par
Hence we are interested in the totally imaginary splitting fields $L$ of $D$. We fix the field $L$ and an embedding $\iota\colon L\hookrightarrow D$. 
The restriction to $L$ of the standard involution in $D$ is complex conjugation. We are interested in an explicit description of $D$ as $L$-algebra. By \cite[Chap.
I]{Vigneras:Quaternions} there exist $\theta\in K^\ast$ and $u\in D$ with the following properties:
\begin{align}
\begin{split}\label{def:u}
 	D&=\iota(L)\oplus u \iota(L),\\
	u^2&=\theta,\\
	u\iota(m)&=\iota(\overline m) u\quad  \forall m\in L.
\end{split}
\end{align}
This implies that $\overline u=-u$ holds.

\begin{remark}\label{def:Lu}
The $L$-algebra structure given by Eq.~(\ref{def:u}) determines $D$ up to
isomorphism (loc.~cit.). We denote this situation by $D=\left(\frac{L,\theta}{K}\right)$, resp. $D=\left(\frac{\iota(L),u^2}{K}\right)$ if we want to 
be explicit about which embedding $\iota\colon L\hookrightarrow D$ and which element $u$ as in Eq.~(\ref{def:u}) we are considering.\par
Of course, such $u\in D$ or even $\theta\in K^\ast$ are not unique.
For example, we could multiply $\theta$ by ${\rm n_{L/K}}(m)$ for $m\in L$, where ${\rm n}_{L/K}$
is the reduced norm of $L/K$. 
\end{remark}
In Section \ref{qmcm} we construct an isogeny between an abelian
variety and a product of abelian varieties of smaller dimension. In terms of
the endomorphism algebra this corresponds to the determination of idempotents.
Therefore, we are interested in non-trivial idempotents $e\in D\otimes_K
L\simeq {\rm M}_2(L)$. We now describe how to explicitly construct such
idempotents.\par
There exists an isomorphism $\kappa$ between $M:=D\otimes_K L$ and the
$L$-linear maps ${\rm Hom}_L(D)$ (\cite[III, 5.1.13]{Knus:QuadHermitForms}).
Every non-trivial idempotent $e\in M$ corresponds via $\kappa$ to a
projection $p_e\colon  D\rightarrow V$, where $V$ denotes an one-dimensional $L$-subspace of
$D$. The projection $p_e$ is orthogonal with respect to the inner product
defined by the reduced trace on $D$, as ${\rm tr}_{M/L}(e)=1$ holds. 

\begin{lemma}\label{idem}
Let $\alpha\in L$ be an arbitrary element satisfying ${\rm tr}_{L/K}(\alpha)=0$ and denote $\delta:=\alpha^2\in K$. Then there exists a bijection of sets
\begin{align*}
\xy\xymatrix@M=0pt@R=10pt@W=10pt{%
*{\{\iota\colon \;L\hookrightarrow D\}} &
\ar[r]
 & &
*{\{\begin{aligned}\text{non-trivial Idempotents } e\in M\end{aligned}\}}\\
*{\iota\colon \;L\hookrightarrow D} & \ar@{|->}[r]  & &
*{e_\iota=\tfrac{1}{2}(1\otimes
1+\iota(\alpha)^{-1}\otimes\alpha),}\\
*{\left\{\begin{aligned}
\iota\colon \;L&\hookrightarrow D\\
\alpha&\mapsto ab^{-1}
\end{aligned}\right.}& 
  & \ar@{|->}[l] & *{e=a\otimes 1+b\otimes\alpha.}%
}
\endxy
\end{align*}
Moreover, the bijection is independent of the choice of $\alpha$.
\end{lemma}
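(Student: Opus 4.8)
The plan is to reduce both directions of the claimed bijection to short computations in $D$ and in $M = D\otimes_K L$. The basic observation is that to give an embedding $\iota\colon L\hookrightarrow D$ is the same as to give an element $\iota(\alpha)\in D$ with $\iota(\alpha)^2 = \delta$: any such element automatically lies outside $K$ (as $\delta$ is not a square in $K$, $L/K$ being a field extension), has reduced trace $0$, and has inverse $\iota(\alpha)/\delta$. I will also use three bookkeeping facts about $M\cong{\rm M}_2(L)$: the element $1\otimes\alpha$ is central; ${\rm tr}_{M/L}(d\otimes 1) = {\rm trd}(d)$ and ${\rm tr}_{M/L}(d\otimes\alpha) = \alpha\,{\rm trd}(d)$ for $d\in D$, where ${\rm trd}$ denotes the reduced trace of $D$; and, under $\kappa$, the involution $\tau\colon d\otimes\ell\mapsto\overline d\otimes\overline\ell$ of $M$ corresponds to the adjoint for the reduced trace form on $D$, so that the non-trivial idempotents under consideration — those whose associated projection $p_e$ is orthogonal, as discussed before the statement — are precisely the $\tau$-fixed ones.

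First I would verify that the forward map is well defined. Writing $w := \iota(\alpha)^{-1}\otimes\alpha = (\iota(\alpha)^{-1}\otimes 1)(1\otimes\alpha)$ as a product of two commuting elements whose squares are the scalars $\delta^{-1}$ and $\delta$, one obtains $w^2 = 1$, so $e_\iota = \tfrac12(1+w)$ is idempotent, with complementary idempotent $\tfrac12(1-w)$. Its two components in $M = (D\otimes 1)\oplus(D\otimes\alpha)$ are $\tfrac12\cdot 1$ and $\tfrac12\iota(\alpha)^{-1}$, both nonzero, so $e_\iota\neq 0,1$; moreover ${\rm tr}_{M/L}(e_\iota) = 1$ because ${\rm trd}(\iota(\alpha)^{-1}) = 0$, and $\tau(e_\iota) = e_\iota$ because $\overline{\iota(\alpha)} = \iota(\overline\alpha) = -\iota(\alpha)$ (the standard involution restricts to conjugation on $\iota(L)$) and $\overline\alpha = -\alpha$. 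Hence $e_\iota$ is a non-trivial idempotent of the required kind.

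Next I would handle the backward map together with the verification that the two maps are mutually inverse. Given such an $e$, write $e = a\otimes 1 + b\otimes\alpha$ with $a,b\in D$ uniquely determined. From $\tau(e) = e$ one reads off $a\in K$ and ${\rm trd}(b) = 0$, so ${\rm tr}_{M/L}(e) = 2a$ and the condition ${\rm tr}_{M/L}(e) = 1$ forces $a = \tfrac12$; comparing $(D\otimes 1)$-components in $e^2 = e$ (using $\alpha^2 = \delta$) gives $a^2 + \delta b^2 = a$, hence $b^2 = (4\delta)^{-1}\in K^\times$, so $b$ is invertible and $ab^{-1} = \tfrac12 b^{-1}$ satisfies $(ab^{-1})^2 = \tfrac14 b^{-2} = \delta$; thus $\alpha\mapsto ab^{-1}$ extends to an embedding $\iota_e\colon L\hookrightarrow D$. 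The two round trips are then one line each: for $e_\iota = \tfrac12\otimes 1 + \tfrac12\iota(\alpha)^{-1}\otimes\alpha$ the recipe returns $ab^{-1} = \iota(\alpha)$, and for $e$ with $a = \tfrac12$ one gets $\iota_e(\alpha) = \tfrac12 b^{-1}$, whence $e_{\iota_e} = \tfrac12\otimes 1 + b\otimes\alpha = e$. Independence of $\alpha$ follows at once: $\{z\in L : {\rm tr}_{L/K}(z) = 0\}$ is the $K$-line through $\alpha$, so any other admissible element is $c\alpha$ for some $c\in K^\times$, and moving $c$ and $c^{-1}$ across the tensor gives $\tfrac12(1\otimes 1 + \iota(c\alpha)^{-1}\otimes c\alpha) = e_\iota$.

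The only step that is not a formal manipulation, and the one I expect to be the main obstacle, is the invertibility of $b$. This is exactly where the hypothesis that $p_e$ is orthogonal (equivalently $\tau(e) = e$, equivalently: the $(D\otimes 1)$-component of $e$ is the scalar $\tfrac12$) is essential: a general non-trivial idempotent of $M\cong{\rm M}_2(L)$ need not have scalar $(D\otimes 1)$-component, and such idempotents do not all arise as $e_\iota$. The complementary observation that $ab^{-1}\notin K$ costs nothing — it again uses only that $\delta$ is not a square in $K$ — and everything else reduces to arithmetic in $D$ governed by~(\ref{def:u}).
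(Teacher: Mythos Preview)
Your argument is correct and takes a genuinely different route from the paper. The paper does not impose $\tau(e)=e$; it works with an arbitrary non-trivial idempotent $e=a\otimes 1+b\otimes\alpha$, uses both relations $a^{2}+b^{2}\delta=a$ and $ab+ba=b$ coming from $e^{2}=e$ (together with ${\rm n}(e)=0$) to show by a short non-commutative calculation that $(ab^{-1})^{2}=\delta$ and ${\rm tr}(ab^{-1})=0$, and gets invertibility of $b$ for free from the fact that $D$ is a division algebra (if $b=0$ then $e=a\otimes 1$ with $a^{2}=a$, forcing $e\in\{0,1\}$). It then simply asserts that the two maps are inverse to one another.

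What your extra hypothesis buys is that the bijection actually holds. With $\tau(e)=e$ one has $a=\tfrac12$, and then both round trips are one line. Without it the backward map $e\mapsto(\alpha\mapsto ab^{-1})$ is still well defined on all non-trivial idempotents, but it is not injective: if $n\in M$ satisfies $en=n$ and $ne=0$ (such nonzero $n$ exist), then $e+n$ is again a non-trivial idempotent and one checks, using $ab^{-1}=b^{-1}(1-a)$, that it maps to the same embedding as $e$. Hence the forward map cannot be onto the full set of non-trivial idempotents --- exactly as you observe at the end. The sentence before the lemma, that ${\rm tr}_{M/L}(e)=1$ forces $p_{e}$ to be orthogonal, does not hold either; your condition $\tau(e)=e$ is the correct way to single out the image of $\iota\mapsto e_{\iota}$. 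None of this affects the rest of the paper, which only ever uses the forward construction $\iota\mapsto e_{\iota}$.
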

\begin{proof}
To ease notations, we denote by ${\rm n}$ and ${\rm tr}$ both the reduced norm resp.\ trace of $M$ over $L$ and the reduced norm
resp.\ trace of $D$ over $K$. 
 First let $e:=a\otimes 1+b\otimes \alpha\in M$ denote a non-trivial idempotent. We calculate
\begin{align*}
 e=e^2=(a\otimes 1+b\otimes \alpha)^2=(a^2+b^2\delta)\otimes 1+(ab+ba)\otimes\alpha,
\end{align*}
and hence the identities
\begin{align}
a^2+b^2\delta&=a\label{idem.gl.1}\\
ab+ba&=b && \Leftrightarrow   & ab^{-1}&=b^{-1}(1-a)\label{idem.gl.3} 
\end{align}
hold. Using Eqs.~(\ref{idem.gl.1}) and (\ref{idem.gl.3}) we calculate
\begin{align*}
  (ab^{-1})\cdot(ab^{-1})&\stackrel{(\ref{idem.gl.3})}{=}(b^{-1}(1-a))\cdot(ab^{-1})\\
	&=b^{-1}ab^{-1}-b^{-1}a^2b^{-1}\\
	&\stackrel{(\ref{idem.gl.1})}{=}%%
        b^{-1}ab^{-1}-b^{-1}(a-b^2\delta)b^{-1}\\
	&=b^{-1}ab^{-1}-b^{-1}ab^{-1}+\delta\\
	&=\delta.
\end{align*}
We calculate
\begin{align*}
0={\rm n}(e)=e\overline e=({\rm n}(a)+{\rm n}(b)\delta)\otimes 1+{\rm tr}(a\overline b)\otimes
 \alpha,
\end{align*}
and hence ${\rm tr}(ab^{-1})={\rm tr}(a\overline b)/{\rm n}(b)=0$. We conclude that
$\alpha\mapsto ab^{-1}$ defines an embedding.\\
Given the element $e_\iota:=\tfrac{1}{2}(1\otimes 1+\iota(\alpha)^{-1}\otimes\alpha)\in M$, we calculate
\begin{align*}
  e_\iota^2=\frac{1}{4}\left((1+\iota(\delta)^{-1}\cdot\delta)\otimes
1+2\cdot\iota(\alpha)^{-1}\otimes\alpha\right)=e_\iota.
\end{align*}
Thus $e_\iota$ is a idempotent which is non-trivial as ${\rm tr}(e_\iota)=1$. The two maps are obviously
arrow-reversing. As the map from the left to the right is injective we conclude that it is in fact a bijection.
It is obvious that the maps of the correspondence are independent of the choice of $\alpha\in L$ with ${\rm tr}(\alpha)=0$.
\end{proof}

\section{Abelian Varieties with Quaternion and Complex Multiplication}\label{qmcm}

Let $g\in{\mathbb N}$ be even, $K$ a totally real field of degree $g/2$ over ${\mathbb Q}$ and let $R$ denote its ring
of integers. Let $D$ denote an indefinite quaternion algebra over $K$ and $\Sigma=\{{\mathfrak p}_1,\dots,{\mathfrak
p}_r\}\ne\emptyset$ the set of ramified primes. We denote by $\dagger\colon D\rightarrow D$ a positive involution on $D$
and by ${\mathcal O}_D$ a maximal $R$-order of $D$.

\begin{definition}
Let $A$ be a principally polarized abelian variety of dimension $g$ over a field $F$. We call $A$ of \emph{type CM} if there exists an embedding 
$\iota\colon L\rightarrow {\rm End}^0(A):={\rm End}(A)$ of a CM field $L$ of dimension $2g$.\\
We call $A$ of \emph{type QM} by $D$ if there exists an embedding $\psi_{\rm QM}\colon {\mathcal O}_D\hookrightarrow{\rm End}(A)$ (of rings) such that the
involution $\dagger$ on $D$ corresponds to the Rosati involution on ${\rm End}^0(A)\otimes{\mathbb Q}$.
The embedding $\psi_{\rm QM}$ is called the \emph{QM-type} of $A$.
\end{definition}

In the rest of the section we fix the following notation. Let $F$ denote an algebraically closed field (of arbitrary characteristic). Let $A$ be a principally
polarized abelian variety over $F$ of even dimension $g$ with QM-type $\psi_{\rm QM}\colon  {\mathcal
O}_D\hookrightarrow {\rm End}(A)$. If $F$ is a field of positive characteristic we assume $A$ to be ordinary. Furthermore, let $A$ 
be of CM type (by a CM field) $\tilde L$. We want to show that in this case we have an embedding $D\otimes_{\mathbb Q} L\hookrightarrow {\rm End}^0(A)$
where $L$ denotes an imaginary splitting field of $D$.

\begin{proposition}\label{pro:QMCM}
 Let $A$ be as above. Then the following holds.
\begin{enumerate}
 \item[(i)] $A$ is isogenous to the product $B^n$ of a nontrivial simple subvariety $B$ of dimension $g/n$ with ${\rm End}^0(B)\simeq L'$, where $L'$ is a CM field of dimension $2g/n$. Furthermore  
      $n$ is even.
 \item[(ii)] There exists an embedding $\varepsilon\colon  D\otimes_K L\hookrightarrow{\rm End}^0(A)$ of the totally imaginary splitting field $L:=KL'$ of $D$ 
      such that $\varepsilon(1\otimes_K L)$ contains the center of ${\rm End}^0(A)$.
\end{enumerate}
\end{proposition}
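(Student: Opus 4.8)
The plan is to exploit the structure theorem for endomorphism algebras of abelian varieties (Albert's classification, as in \cite{Mumford:AV}) applied to $A$, using simultaneously the QM-structure and the CM-structure. First I would argue for (i). Since $A$ has QM by $D$, the algebra ${\rm End}^0(A)$ contains a quaternion algebra over the totally real field $K$, so no simple factor of $A$ can have a commutative endomorphism algebra of the "wrong" type; combined with the fact that $A$ also has an embedding of a CM field $\tilde L$ of dimension $2g$ (so ${\rm End}^0(A)$ contains a commutative semisimple subalgebra of dimension $2g$), $A$ must be isogenous to a power $B^n$ of a simple abelian variety $B$ whose endomorphism algebra ${\rm End}^0(B)$ is a field $L'$. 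In the ordinary (or characteristic zero) case, $L'$ must be a CM field: it cannot be totally real, since then ${\rm End}^0(A)\simeq {\rm M}_n(L')$ would not contain a field of dimension $2g=4\dim B\cdot n/2$... more precisely, the existence of the CM field $\tilde L\hookrightarrow {\rm M}_n(L')$ of dimension $2g = 2n\dim B$ forces $[L':{\mathbb Q}]$ to be maximal, i.e. $2\dim B$, hence $L'$ is CM by Albert. Finally, $n$ is even: the quaternion algebra $D$ over $K$ embeds into ${\rm M}_n(L')$, and since $D$ is a division algebra (non-split, as $\Sigma\neq\emptyset$) and $L'$ splits $D$ (it is a splitting field because it is a quadratic extension of $K$... wait, $L'$ need not contain $K$), I would instead count dimensions: $D\otimes_K (\text{center})$ must embed, forcing $2\mid n$. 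Let me organize this carefully using that the commutant of $\psi_{\rm QM}(D)$ in ${\rm End}^0(A)\simeq {\rm M}_n(L')$ has dimension $4n^2[L':{\mathbb Q}]/(4[K:{\mathbb Q}])=n^2[L':{\mathbb Q}]/[K:{\mathbb Q}]$, which must be an integer, and combined with the Rosati-positivity of $\dagger$ this pins down the parity of $n$.

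For (ii), set $L := KL'$, the compositum. Since $[L':{\mathbb Q}]=2\dim B=2g/n$ and $[K:{\mathbb Q}]=g/2$, and $B^n\sim A$ has QM by $D$ over $K$, the field $K$ acts on $A$ through the center of ${\rm End}^0(B^n)={\rm M}_n(L')$, hence $K\hookrightarrow L'$ or $K$ and $L'$ generate $L=KL'$ with $[L:K]=2$; I would verify $L$ is a totally imaginary quadratic extension of $K$ (it is a CM field, being a compositum of $K$ totally real with the CM field... actually with $K\subseteq L'$, one gets $L=L'$; in general $L=KL'$ is CM of degree $2[K:{\mathbb Q}]$ over ${\mathbb Q}$, degree $2$ over $K$). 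By \cite[Theorem III.3.8]{Vigneras:Quaternions} as recalled in Section \ref{splitting}, to see $L$ splits $D$ it suffices that no prime of $K$ in $\Sigma$ splits completely in $L$; this I would derive from the fact that $L$ embeds into ${\rm End}^0(A)$ which already contains the (possibly ramified) quaternion algebra $D$, so $L$ cannot split any ramified prime. Then $D\otimes_K L\simeq {\rm M}_2(L)$, and I construct the embedding $\varepsilon$ as follows: the $K$-algebra $D$ acts via $\psi_{\rm QM}$, and $L$ acts via the embedding $L\hookrightarrow {\rm End}^0(A)$ coming from the center $L'$ of ${\rm M}_n(L')$ together with $K$; since the image of $D$ centralizes $K$ and the image of $L$ lies in the center composed with... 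I need these two actions to commute, which holds because $L$ (via $L'$) lies in the \emph{center} of ${\rm End}^0(B^n)$. Wait — $D$ acts through $\psi_{\rm QM}$ which need not land in a matrix ring over the center; but its image commutes with the center $L'$, hence with $L=KL'$. Thus $\psi_{\rm QM}$ and the $L$-action combine to a homomorphism $D\otimes_K L\to {\rm End}^0(A)$, injective since $D\otimes_K L$ is simple. By construction $\varepsilon(1\otimes L)=L$ contains the center $L'$ of ${\rm End}^0(A)$.

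The main obstacle is the bookkeeping in (i): correctly deducing that the single simple factor $B$ has endomorphism algebra a CM \emph{field} (not a division algebra, not totally real, not a quaternion algebra), and that $n$ is \emph{even}. The cleanest route is: (a) $\tilde L\hookrightarrow {\rm End}^0(A)$ of dimension $2g$ with $\tilde L$ a field forces ${\rm End}^0(A)$ to contain a maximal commutative subfield of a simple algebra, which via the $\sim B^n$ decomposition forces $[{\rm End}^0(B):{\mathbb Q}]\cdot n = 2g$ only if ${\rm End}^0(B)$ itself has a subfield of degree $2\dim B$ — by Albert this means ${\rm End}^0(B)$ is either a CM field or a quaternion algebra with a CM splitting field, and the presence of the QM-structure with its positive involution $\dagger$, together with ordinariness in char $p$, rules out the quaternion and the totally-real possibilities, leaving ${\rm End}^0(B)\simeq L'$ a CM field; (b) for the parity, $D\hookrightarrow {\rm End}^0(A)\simeq {\rm M}_n(L')$ makes ${\rm M}_n(L')$ a $D$-module, so its $L'$-dimension $n^2$ is divisible by the $L'$-dimension of any simple $D\otimes_K L'$-module; since $L'\supseteq K$ (or after base change to $L=KL'$, which splits $D$) the simple $D$-module over a splitting field has even reduced dimension, giving $2\mid n$. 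I would present (b) via the splitting $D\otimes_K L\simeq {\rm M}_2(L)$: then ${\rm M}_n(L')\otimes_{L'} L$ is a module over ${\rm M}_2(L)$, so $n\cdot[L:L']$ (the $L$-rank) is even; as $[L:L']\in\{1,2\}$ one checks $2\mid n$ directly in each case. This reduction to the already-established splitting statement from Section \ref{splitting} is what makes the parity clean.
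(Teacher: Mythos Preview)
Your outline has the right architecture and several correct ingredients, but there are two genuine gaps where the argument as written would not close.

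\textbf{Parity of $n$.} Your final attempt reduces to: $(L')^n\otimes_{L'}L$ is an ${\rm M}_2(L)$-module, so $n\cdot[L:L']$ is even, and ``as $[L:L']\in\{1,2\}$ one checks $2\mid n$ in each case''. But if $[L:L']=2$ the statement $2\mid 2n$ is vacuous, so this case is not handled. In fact $[L:L']$ is not constrained to $\{1,2\}$ a priori; once the field inclusions are sorted out one actually has $[L:L']=n/2$, which is what you want to prove, so you cannot assume it. The paper gets parity in a different, cleaner way: it first shows, using \cite[Prop.\ 5.4]{Shimura:CM}, that $[KL':K]\le 2$; since $L'$ is totally imaginary and $K$ totally real this forces $K'\subseteq K$ (where $K'$ is the maximal totally real subfield of $L'$), and then the degree count $[K:\mathbb Q]=g/2$, $[K':\mathbb Q]=g/n$ gives $[K:K']=n/2$, so $n$ is even. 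You never establish $K'\subseteq K$, and without it the module-theoretic route does not obviously terminate.

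\textbf{Splitting of $D$ by $L$.} You propose to use the local criterion from \cite{Vigneras:Quaternions} (no prime in $\Sigma$ splits in $L$) and justify it by ``$L$ embeds into ${\rm End}^0(A)$ which already contains $D$, so $L$ cannot split any ramified prime''. This is not an argument: the presence of $D$ and of $L$ inside ${\rm M}_n(L')$ does not by itself prevent a $\mathfrak p\in\Sigma$ from splitting in $L$. The paper avoids the local criterion entirely and instead shows $D\otimes_K L\simeq{\rm M}_2(L)$ \emph{directly} by computing the centralizer $C$ of $D':=D\otimes_K KL'$ inside ${\rm M}_n(L')$, finding $C=KL'$, and applying the double centralizer theorem (\cite[Theorems 4.10, 4.11]{Jacobson:Algebra2}). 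This simultaneously yields the splitting and the embedding $\varepsilon$, and it is the step you are missing.

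A minor point: your elimination of the non-field possibilities for ${\rm End}^0(B)$ in positive characteristic is only gestured at. The paper invokes \cite{Oort:CMAV} to reduce to $B$ over a finite field and then \cite{Oort:Endomorphism-Algebras} to conclude from ordinariness that ${\rm End}^0(B)$ is commutative; some such input is genuinely needed here.
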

\begin{proof} 
      As $A$ has complex multiplication by a field we conclude by \cite[\S 5 Prop.\ 3]{Shimura:CM} that $A$ is isogenous to a product 
      $B^n$ of a simple subvariety $B$. If we denote by $Z$ the center of ${\rm End}^0(B)$, then \cite[\S 5 Prop.\ 4]{Shimura:CM} states that
      \begin{align} 
	[{\rm End}^0(B):Z]\cdot [Z:\mathbb Q] = 2g/n.\label{eq:cm} 
      \end{align}
      By the classification of endomorphism algebras of simple abelian varieties (see \cite[p.\ 202]{Mumford:AV}) 
      it follows that ${\rm End}^0(B)$ is either:
      \begin{enumerate}
       \item[(a)] isomorphic to a CM field $L'$ of dimension $2g/n$, usually called Type IV$(g/n,1)$,
       \item[(b)] isomorphic to a quaternion algebra $\tilde D$ over a CM field $\tilde L$ of dimension $g/n$, Type IV$(g/2n,2)$.
       \item[(c)] isomorphic to a definite quaternion algebra over a (totally) real number field $K'$ with $[K':\mathbb Q]=g/n$ , Type III$(g/n)$,
      \end{enumerate}
      First we show that in any characteristic only case (c) is possible. In characteristic $0$ this follows
      from the fact that $[{\rm End}^0(B):\mathbb Q]\mid 2g/n$. In characteristic $p$ we use the following reasoning. The simple abelian variety $B$ 
      has sufficiently many endomorphism in the sense that Eq.~(\ref{eq:cm}) is satisfied. By \cite[Theorem 1.1]{Oort:CMAV}, we may then assume
      that $B$ is defined over a finite field. As $A$ and hence $B$ are ordinary we conclude by \cite[Prop.\ 3.14]{Oort:Endomorphism-Algebras} that 
      ${\rm End}^0(B)$ is commutative.\\
      Hence ${\rm End}^0(B)$ is a CM field $L'$ of dimension $2g/n$. Denote by $K'$ its maximal totally real subfield, whence 
      $L'/K'$ is an imaginary quadratic extension. The field $L'$ is also the center 
      of ${\rm End}^0(A)\simeq {\rm M}_n({\rm End}^0(B))$. Therefore the subalgebra $D':=D\otimes_{K} KL'$ of ${\rm End}^0(A)$ is a quaternion algebra 
      over the field $KL'\subset {\rm End}^0(A)$. By \cite[Prop.\ 5.4]{Shimura:CM} again $[KL':K]\leq 2$ and 
      as $L'$ is an imaginary field we conclude that $KK'=K$ and $KL'/K$ is an imaginary quadratic extension. 
      As $[K:\mathbb Q]=g/2$ and $[K':\mathbb Q]=g/n$ it follows that $[K:K']=n/2$. In particular, $n$ is even and greater than $1$.
      It remains to show that $D'={\rm M}_2(KL')$ or, equivalently, that there exists an embedding $\tilde \iota\colon L'\rightarrow D$.
      We can apply \cite[Theorem 4.11]{Jacobson:Algebra2} as $D'$ is contained in ${\rm M}_n(L')$ as $L'$-subalgebra. Let $C$ denote the centralizer of
      $D'$ in ${\rm M}_n(L')$. It contains $KL'$. By \cite[Theorem 4.11]{Jacobson:Algebra2} we conclude that $C=KL'$ and, by applying the double centralizer 
      Theorem (\cite[Theorem 4.10]{Jacobson:Algebra2}), that $D'\simeq {\rm M}_2(KL')$.
\end{proof}

\begin{figure}[ph]
 \xymatrix{
    & DL'\ar@{-}[d]^{2^2} &\\
    D \ar@{^{(}->}[ur] \ar@{-}[d]_{2^2} & KL' \ar@{-}[dl]_{2} \ar@{-}[d]_{n} \ar@{=}[r] &C \ar@{-}[dl]^{n}\\
    K \ar@{-}[dr]_{n/2}  &  L'\ar@{-}[d]^{2} &\\
    &  K'\ar@{-}[d]^{g/n} &\\
    &  \mathbb Q &%
 }
\vspace*{8pt}
\caption{Diagram indicating the algebraic structure of 
      the subrings of ${\rm End}^0(A)$ mentioned above.\label{fig1}}
\end{figure}
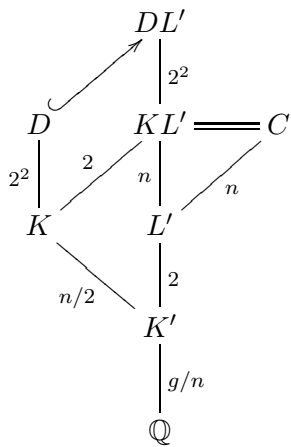

\begin{remark}
\begin{enumerate}
 \item
  In the case of characteristic $0$ Proposition \ref{pro:QMCM} can be found in \cite[\S 9.9]{Shimura:ClassFields}. There
  it is phrased in terms of special points on Shimura varieties of type QM and the corresponding abelian varieties. Another
  reference for the statement of Proposition \ref{pro:QMCM} without proof is \cite[\S 5]{Rotger:ForgetfulMaps}.
  \item
  Note that in Proposition \ref{pro:QMCM}. $KL'$ is the only possible choice for a splitting field $L$ of $D$ such that $D\otimes_{\mathbb Q} L$
  can be embedded in ${\rm End}^0(A)$. Furthermore, the embedding $\varepsilon\colon  D\otimes_K L\hookrightarrow{\rm End}^0(A)$ is uniquely determined (up to conjugation in $1\otimes L'$) by
  the QM type $\psi_{\rm QM}\colon D\hookrightarrow {\rm End}^0(A)$.
\end{enumerate}
\end{remark}

In the following we are interested in constructing an explicit isogeny $A\rightarrow \tilde A^2$ for the QM+CM abelian variety $A$, where $\tilde A$ is an abelian variety of dimension $g/2$.
Let in the following $\varepsilon\colon  D\otimes_K L\hookrightarrow{\rm End}^0(A)$ be fixed. We denote by $R\otimes_R {\mathcal O}$ the pre-image of the 
center of ${\rm End}(A)\cap\psi_{\rm QM}(D)$ under $\varepsilon$. Then ${\mathcal O}\subset L$ is some $R$-order of $L$. 
We denote ${\mathfrak c}\subset{\mathcal O}_L$ its conductor in the maximal order ${\mathcal O}_L\supset{\mathcal O}$ of $L$. 
In the following we write ${\mathcal O}_{L,{\mathfrak c}}$ for ${\mathcal O}$. Then 
\begin{align*}
 \varepsilon\colon {\mathcal O}_D\otimes_R {\mathcal O}_{L,{\mathfrak c}}\hookrightarrow {\rm End}(A)
\end{align*}
is an embedding.\par
We call an embedding $\iota\colon L\hookrightarrow D$ \emph{optimal embedding} of ${\mathcal O}_{L,{\mathfrak c}}\subset
L$ if $\iota^{-1}({\mathcal O}_D)={\mathcal O}_{L,{\mathfrak c}}$. By \cite[Cor.\ III.5.12]{Vigneras:Quaternions}, there exists an optimal embedding
\begin{align*}
\iota\colon L\hookrightarrow D 
\end{align*}
of ${\mathcal O}_{L,{\mathfrak c}}$ if and only if the conductor $\mathfrak c$ is not contained in any prime ideal $\mathfrak p$ of $L$ such that $\mathfrak p\cap K$ 
is in the ramification locus $\Sigma$ of $D/K$. By our assumptions there exists an optimal embedding of ${\mathcal O}_{L,{\mathfrak c}}$ and in the following we fix one.
Let $\alpha\in {\mathcal O}_{L,{\mathfrak c}}$ be such that ${\rm tr}_{L/K}(\alpha)=0$ is satisfied, and denote
\begin{align*}
 e:=\frac{1}{2}(1\otimes1+\iota(\alpha)^{-1}\otimes\alpha)\in D\otimes_K L
\end{align*}
the unique idempotent corresponding to the embedding $\iota\colon L\hookrightarrow D$ as in Proposition \ref{idem}. We further
denote by $\overline e\in D\otimes_K L$ the conjugate of $e$ w.r.t.\ the standard involution on $D\otimes_K L$. By the definition of $e$, the identity $\overline e=1-e$
holds. In other words $\overline e$ is the complement of the idempotent $e$ as element of ${\rm Hom}_{L}(D)$. Note that the elements $2e \alpha$ and
$2\overline e \alpha$ are in $\iota({\mathcal O}_{L,{\mathfrak c}})\otimes_R {\mathcal O}_{L,{\mathfrak c}}$.\par
We introduce the following notation. If $I\subset{\rm End}(A)$ is an ideal we denote by $A[I]$ the subgroup scheme
defined as the intersection of the kernels of all elements in $I$. Note that if we write ideal we always mean left ideal. 
It is easy to see that $A[I]$ is finite if and only if $I$ contains an isogeny.
\begin{theorem}\label{haupt}
Let the notation and assumptions be as above. In particular $A$ denotes an abelian variety over the field $F$ of even dimension $g$ with QM
and CM, which is ordinary if ${\rm char}(F)>0$. Then there exist isogenous abelian varieties $\tilde A_e,\;\tilde A_{\overline e}$ of dimension $g/2$ 
with complex multiplication by (at least) $\mathcal O_{L,\mathfrak c}$ and for every element $\alpha\in {\mathcal O}_{L,\mathfrak c}$ with ${\rm tr}_{L/K}=0$ 
an isogeny 
\begin{align*}
\psi_{e\alpha}\colon A\rightarrow \tilde A_e\times \tilde A_{\overline e}.
\end{align*}
with ${\rm deg}(\psi_{e\alpha})\mid 4^g\cdot{\rm n}_{L/{\mathbb Q}}(\alpha)^2$.
\end{theorem}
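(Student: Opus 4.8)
The plan is to use the idempotent $e \in D \otimes_K L \subset \mathrm{End}^0(A)$ to cut $A$ up to isogeny into two pieces and then track denominators to bound the degree. First I would set $\tilde A_e := (2e\alpha)(A)$ and $\tilde A_{\overline e} := (2\overline e\alpha)(A)$, using that $2e\alpha, 2\overline e\alpha \in \iota(\mathcal O_{L,\mathfrak c}) \otimes_R \mathcal O_{L,\mathfrak c} \subset \mathrm{End}(A)$ are honest endomorphisms (as noted just before the theorem). Since $e + \overline e = 1$, the map $\psi_{e\alpha} = (2e\alpha, 2\overline e\alpha) \colon A \to \tilde A_e \times \tilde A_{\overline e}$ has finite kernel: if $x$ lies in the kernel then $2\alpha \cdot x = (2e\alpha)(x) + (2\overline e\alpha)(x) = 0$, so $\ker \psi_{e\alpha} \subseteq A[2\alpha]$, which is finite because $2\alpha$ is an isogeny (it is a nonzero element of the CM field $L$ acting on $A$). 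This simultaneously shows $\psi_{e\alpha}$ is an isogeny and that $\dim \tilde A_e + \dim \tilde A_{\overline e} = g$; that each piece has dimension exactly $g/2$ follows because $e$ and $\overline e$ are conjugate under the standard involution (equivalently under an element of $1 \otimes L$), so $\tilde A_e$ and $\tilde A_{\overline e}$ are isogenous and hence equidimensional.

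Next I would produce the CM structure on the two factors. The order $\mathcal O_{L,\mathfrak c}$ acts on $A$ through $\varepsilon(1 \otimes \mathcal O_{L,\mathfrak c})$, which is central in $\mathrm{End}^0(A)$ and in particular commutes with $e$; hence multiplication by $1 \otimes m$ for $m \in \mathcal O_{L,\mathfrak c}$ preserves the image $\tilde A_e = (2e\alpha)(A)$ and likewise $\tilde A_{\overline e}$. This gives embeddings $\mathcal O_{L,\mathfrak c} \hookrightarrow \mathrm{End}(\tilde A_e)$ and $\mathcal O_{L,\mathfrak c} \hookrightarrow \mathrm{End}(\tilde A_{\overline e})$, so both factors have CM by at least $\mathcal O_{L,\mathfrak c}$; since $[L:\mathbb Q] = g$ and $\dim \tilde A_e = g/2$, this is CM in the usual sense. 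The isogeny between $\tilde A_e$ and $\tilde A_{\overline e}$ is induced by $u \in \mathcal O_D$ (or a suitable multiple lying in $\mathcal O_D$) with $u \iota(m) = \iota(\overline m) u$ as in Eq.~(\ref{def:u}): conjugation by $u \otimes 1$ swaps $e$ and $\overline e$ because it swaps $\iota(\alpha)$ and $-\iota(\alpha) = \iota(\overline\alpha)$ in the formula $e = \tfrac12(1 \otimes 1 + \iota(\alpha)^{-1} \otimes \alpha)$, so $u$ carries $\tilde A_e$ into $\tilde A_{\overline e}$.

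Finally, for the degree bound I would compute $\deg \psi_{e\alpha}$ by factoring through $A[2\alpha]$. One clean way: consider the dual/complementary isogeny. Because $e, \overline e$ are orthogonal idempotents in $\mathrm{End}^0(A)$ with $e + \overline e = 1$, the natural map $\tilde A_e \times \tilde A_{\overline e} \to A$ coming from the inclusions composed with $A \xrightarrow{\text{mult by } \tfrac{1}{2\alpha}} A$ — made integral by clearing the denominator — gives a one-sided inverse up to multiplication by an element of $(2\alpha)\mathbb Z[\ldots]$; concretely $\psi_{e\alpha}$ composed with "sum" equals multiplication by $2\alpha$ on $A$ (since $2e\alpha + 2\overline e\alpha = 2\alpha$), and multiplication by $2\alpha$ on a $g$-dimensional abelian variety has degree $\mathrm{n}_{L/\mathbb Q}(2\alpha)^2 = 4^g \,\mathrm{n}_{L/\mathbb Q}(\alpha)^2$ (the square coming from $\dim = g$ and the CM field having degree $g$, so $[2\alpha]$ acts with determinant $\mathrm{n}_{L/\mathbb Q}(2\alpha)$ on each of the two "halves" of the Tate module, or directly $\deg[\beta] = |\mathrm{N}_{\mathrm{End}^0 \to \mathbb Q}(\beta)|$). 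Hence $\deg \psi_{e\alpha}$ divides $\deg[2\alpha]_A = 4^g \,\mathrm{n}_{L/\mathbb Q}(\alpha)^2$. The main obstacle I anticipate is the bookkeeping in this last step: one must be careful that $2\alpha$ need not be central enough to split as a clean product across the two factors, and that $\mathrm{n}_{L/\mathbb Q}(\alpha)$ rather than $\mathrm{n}_{L/K}(\alpha)$ appears; I would handle this by working on Tate modules (or Dieudonné modules in the ordinary positive-characteristic case, which is exactly why ordinarity is assumed) where $e$ acts as a genuine idempotent after inverting $2\alpha$, so that $\psi_{e\alpha}$ becomes the inclusion of a sublattice whose index divides that of $(2\alpha) T_\ell A$ in $T_\ell A$, giving the divisibility rather than equality.
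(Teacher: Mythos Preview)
Your proposal is correct and follows essentially the same approach as the paper: you define $\tilde A_e,\tilde A_{\overline e}$ as the images of $2e\alpha,2\overline e\alpha$ (the paper writes these as $e\eta,\overline{e\eta}$ with $\eta=2\alpha\in\iota(\mathcal O_{L,\mathfrak c})$, which are literally the same elements), use $u$ from Eq.~(\ref{def:u}) to exhibit the isogeny between the two pieces, and bound $\deg\psi_{e\alpha}$ by noting that $\ker\psi_{e\alpha}\subset A[2\alpha]$. The only cosmetic difference is that the paper uses the \emph{difference} $e\eta-\overline{e\eta}=\eta\in\iota(L)\otimes 1$ rather than your \emph{sum} $2e\alpha+2\overline e\alpha=1\otimes 2\alpha$ to show $A[I]$ is finite and to obtain the degree divisibility, citing \cite[\S7 Prop.~7, 10]{Shimura:CM} directly for $\deg(\eta)=\mathrm n_{L/\mathbb Q}(\eta)^2$; your Tate/Dieudonn\'e-module justification is therefore unnecessary, and the ordinarity hypothesis is actually consumed earlier in Proposition~\ref{pro:QMCM} rather than here.
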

\begin{proof} 
First we construct independent abelian subvarieties of $A$ of dimension $g/2$ with CM by ${\mathcal O}_{L,{\mathfrak c}}$.\\
For this purpose let $\eta\in\iota({\mathcal O}_{L,{\mathfrak c}})$ with ${\rm tr}(\eta)=0$ be such that
$e\eta\in{\rm End}(A)$ holds. As $e$ is a non-trivial idempotent, the image $\tilde A_{e\eta}:=e\eta(A)$ is a
non-trivial abelian subvariety of $A$. Analogous $\tilde A_{\overline{e\eta}}:=\overline{e\eta}(A)$ is a non-trivial
abelian subvariety. If we take a different element $\eta'\in\iota({\mathcal O}_{L,{\mathfrak c}})$ with ${\rm tr}(\eta')=0$ 
then $\tilde A_{e\eta}$ (resp.\ $\tilde A_{\overline{e\eta}}$) and $\tilde A_{e\eta'}$ (resp.\ $\tilde A_{\overline{e\eta'}}$) 
coincide as subvarieties of $A$. Hence w.l.o.g.\ we can choose $\eta:=2\alpha$ where $\alpha\in{\mathcal O}_{L,\mathfrak c}$
is an element of trace ${\rm tr}_{L/K}(\alpha)=0$ as in the statement of the theorem. Hence we simply write $\tilde{A_e}$ 
(resp.\ $\tilde{A_{\overline e}}$) for $\tilde A_{e\eta}$ (resp.\ $\tilde A_{\overline{e\eta}}$).\\
We claim that $\tilde A_{e}$ and  $\tilde A_{\overline{e}}$ are isogenous.
Let $u\in D$ be an element such that Eq.~(\ref{def:u}) holds. Without loss of generality, we may suppose that $u\in{\mathcal
O}_D$, after multiplication by an appropriate $N\in{\mathbb N}$. Then $\overline{e\eta}u=u e\eta$ holds in ${\rm
End}(A)$, that is the diagram
\begin{align*}
\xymatrix{ {A}\ar[d]^{u} \ar^{e\eta}[r]
& {\tilde A_{e}} \ar[d]^{u|_{\tilde A_{e}}} \\
{A} \ar[r]^{\overline e\overline\eta} & {\tilde A_{\overline{e}}}.
}\label{iso.ell.cur}
\end{align*}
is commutative. It is shown in \cite[\S 7 Prop.\ 7]{Shimura:CM} that the morphism $u$ has degree ${\rm deg}(u)={\rm
n}_{D/{\mathbb Q}}(u)={\rm n}_{K/{\mathbb Q}}(\theta)$, where $\theta=u^2$ is some element in $K$. 
We conclude that $\tilde u:=u|_{\tilde A_{e}}$ is an isogeny and that its degree ${\rm deg}(\tilde u)$
divides ${\rm n}_{K/{\mathbb Q}}(\theta)$. As $1=e+\overline e$ it follows that $A_{e}$ and $\tilde
A_{\overline{e}}$ both have dimension $g/2$.\\
By an easy computation one checks that the commutator of $e\eta$ and the commutator of $\overline{e\eta}$ in ${\rm End}(A)$ contain $\iota({\mathcal
O}_{L,{\mathfrak c}})\otimes_R {\mathcal O}_{L,{\mathfrak c}}$. We conclude that $\tilde A_{e}$ and $\tilde A_{\overline{e}}$ have 
complex multiplication by at least ${\mathcal O}_{L,{\mathfrak c}}\hookrightarrow {\rm End}(\tilde A_{e})$. 
Next we construct an isogeny $\psi_{e\eta}\colon \; A\longrightarrow  \tilde A_{e}\times \tilde A_{\overline{e}}$ as
in the statement of the theorem.\\
Let $I:=\langle e\eta,\overline{e\eta}\rangle$ be the left ideal in ${\rm End}(A)$ generated by $e\eta$ and
$\overline{e\eta}$. The subgroup scheme $A[I]\subset A$ is finite as the element $\eta=e\eta-\overline{e\eta}\in I\cap
{\mathcal O}_D$ is an isogeny. Hence $A\rightarrow A/A[I]$ is an isogeny, which by \cite[\S 7 Prop.\ 7]{Shimura:CM} is
given by 
\begin{align*}
 \psi_{e\eta}\colon  &A\longrightarrow  \tilde A_{e}\times \tilde
A_{\overline{e}} \subset A^2\\
	&P\longmapsto  (e\eta(P),\overline{e\eta}(P)).
\end{align*}
We denote by ${\rm n}_{L/{\mathbb Q}}\colon L\rightarrow {\mathbb Q}$ the reduced norm of $L$ over ${\mathbb Q}$. Then it
is shown in \cite[\S 7 Prop.\ 10]{Shimura:CM} that ${\rm deg}(\psi_{e\eta})\mid {\rm deg}(\eta)={\rm n}_{L/{\mathbb
Q}}(\eta)^2$ holds.
\end{proof}
\begin{remark}
\begin{enumerate}
 \item %
    The fact that $A$ of dimension $g$ is isogenous to two isogenous abelian subvarieties of dimension $g/2$
    with CM already follows from Proposition \ref{pro:QMCM}. The interesting part of Theorem \ref{haupt} is the special choice
    of idempotent $e$. So, an isogeny $\psi_e$ together with an estimate on its degree can be given without much 
    knowledge of the algebraic structure of ${\rm End}(A)$.
 \item %
    In the definition of an abelian variety $A$ of QM-type we assumed that a
    maximal order $\mathcal O_D\subset D$ is contained in ${\rm End}(A)$. But,
    by the proof above, we see that for the construction of the isogeny
    $\psi_{e\eta}\colon A\rightarrow \tilde A_e\times \tilde A_{\overline e}$
    it suffice that there exists an isomorphism $D\otimes_{\mathbb Q} L
    \xrightarrow{\sim} {\rm End}^0(A)$. In the extended case it is more
    difficult to give estimates on the degree of $\psi_{e\eta}$.
\end{enumerate}
\end{remark}

\section{Case of Fake Elliptic Curves}\label{qmcm.2}

In this section we first apply Theorem \ref{haupt} to the case of so called fake elliptic curves, that is abelian surfaces
of type QM+CM. We then give an explicit construction of these abelian surfaces. We first need the following lemma whose proof
is a corollary of the theorem of arithmetic progression. Recall that we assume the abelian surface $A$ to be ordinary if it is
defined over a field $F$ of positive characteristic. As was already mentioned in the proof of Proposition \ref{pro:QMCM} we
may in this case assume that $F$ is a finite field.\\
In the following we want to find conditions on $\theta\in\mathbb N$ such that $D=\left(\frac{\Delta_L,\theta}{\mathbb Q}\right)$.
Therefor, denote for a prime $p$ by $v_p(x):=\max\{n\colon p^n\mid x\}$ the valuation of $x$ at $p$.

\begin{lemma}\label{lemma.pre.arith}
 Let $\theta$ be a natural number such that $D\simeq\left(\frac{\Delta_L,\theta}{\mathbb Q}\right)$.
 Denote $\Delta_L:={\rm disc}(L)$ and $m_0$ the product of odd primes $p\mid{\rm disc}(D)$ with $p\nmid \Delta_L$. 
 Then $m_0\mid \theta$ and $m:=\frac{\theta}{m_0}$ satisfies the following properties:
  \begin{enumerate}
    \item $m$ is positive.
    \item If $p\nmid m_0 \Delta_L$ is an odd prime then $p\nmid {\rm disc}(D)$.
    \item If $p\mid m_0$ is an odd prime then $v_p(m)$ is even.
    \item If $p\mid \Delta_L$ is an odd prime then 
      \[ (\Delta_L,\theta)_p= (-1)^{v_p(m)\varepsilon(p)} \left(\frac{\Delta_L/p}{p}\right)^{v_p(m)}\cdot \left(\frac{m_0}{p}\right)\left(\frac{m/p^{v_p(m)}}{p}\right)\]
      holds.
    \item For $p=2$ we make the following distinction:
    \begin{enumerate}
      \item In the case $\Delta_L\equiv 5 \pmod 8$ it holds that $2\mid{\rm disc}(D)$ if and only if $v_2(m)$ is odd.
      \item In the case $\Delta_L\equiv 1 \pmod 8$ it holds that $2\nmid{\rm disc}(D)$ and there are no further restrictions on $m$.
      \item In the case $-d\equiv 3 \pmod 4$ it holds that $2\mid{\rm disc}(D)$ if and only if $\varepsilon(\tilde\theta)+v_2(m)\omega(-d)\equiv 1 \pmod 2 $ .
      \item In the case $-d$ even it holds that $2\mid {\rm disc}(D)$ if and only if $\varepsilon(-d/2)\varepsilon(\tilde\theta)+\omega(\theta)+v_2(m)\omega(-d/2)\pmod 2$.
    \end{enumerate}
  \end{enumerate}
  On the other hand, if we chose an $m\in\mathbb N$ satisfying 1.-5. then $D\simeq\left(\frac{\Delta_L,\theta}{\mathbb Q}\right)$ holds.
\end{lemma}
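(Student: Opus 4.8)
The plan is to reduce the claimed equivalence to a collection of purely local statements via the classification of quaternion algebras over $\mathbb Q$ by their ramification set (\cite{Vigneras:Quaternions}): two quaternion algebras over $\mathbb Q$ are isomorphic if and only if they are ramified at the same finite set of places, this set has even cardinality, and $\left(\frac{a,b}{\mathbb Q}\right)$ is ramified at a place $v$ exactly when the Hilbert symbol $(a,b)_v$ equals $-1$. Since $D$ is totally indefinite it is unramified at the archimedean place, and $\left(\frac{\Delta_L,\theta}{\mathbb Q}\right)$ is unramified there precisely because $\Delta_L<0$ and $\theta>0$; this is the role of item~(1). Thus everything comes down to verifying, for each rational prime $p$, the equivalence $(\Delta_L,\theta)_p=-1\iff p\mid{\rm disc}(D)$, and items~(2)--(5) are nothing but the explicit form of this equivalence once one inserts the standard formulas for $(\cdot,\cdot)_p$. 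I would use throughout: for odd $p$, writing $a=p^\alpha u$ and $b=p^\beta v$ with $u,v$ $p$-adic units, $(a,b)_p=(-1)^{\alpha\beta\varepsilon(p)}\left(\frac{u}{p}\right)^\beta\left(\frac{v}{p}\right)^\alpha$, and the corresponding formula at $p=2$ built from $\varepsilon$ and $\omega$ of the odd parts; together with the fact, already available from \S\ref{splitting} and Proposition~\ref{pro:QMCM}, that $L$ is a splitting field of $D$, i.e.\ no prime of $L$ above a ramified prime of $D$ is totally split.

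The calculation then proceeds by sorting the primes according to their interaction with $\Delta_L$. For an odd prime $p\nmid\Delta_L$ one has $(\Delta_L,\theta)_p=\left(\frac{\Delta_L}{p}\right)^{v_p(\theta)}$; if in addition $p\mid{\rm disc}(D)$, then $p$ is unramified in $L$ and not totally split, hence inert, so $\left(\frac{\Delta_L}{p}\right)=-1$, and the required equivalence forces $v_p(\theta)$ to be odd, in particular $p\mid\theta$. This yields $m_0\mid\theta$, and dividing out $m_0$ turns the remaining content into items~(2) and~(3). For an odd prime $p\mid\Delta_L$ one has $v_p(\Delta_L)=1$ (as $d$ is squarefree and $p$ is odd) and $v_p(\theta)=v_p(m)$ (as $m_0$ is odd), while the odd part of $\theta$ factors as $m_0\cdot\bigl(m/p^{v_p(m)}\bigr)$; substituting these into the odd-$p$ formula reproduces exactly the expression displayed in item~(4).

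I expect the prime $p=2$ to be the main obstacle, since that is where the bookkeeping is heaviest: one must simultaneously track $v_2(\Delta_L)$, the $2$-adic odd parts of $\Delta_L$ and of $\theta$, and the symbols $\varepsilon,\omega$ of several odd units. The organizing observation is that $v_2(\Delta_L)$ is determined by the shape of $-d$: it equals $0$ when $\Delta_L\equiv 1$ or $5\pmod 8$, it equals $2$ when $-d\equiv 3\pmod 4$, and it equals $3$ when $-d$ is even, the respective odd parts being $\Delta_L$, $-d$, and $-d/2$. Feeding each of these three regimes into the $p=2$ Hilbert symbol formula, and using $v_2(\theta)=v_2(m)$ and $\tilde\theta$ for the odd part of $\theta$, yields the four sub-cases (a)--(d); in particular, when $\Delta_L\equiv 1\pmod 8$ (so that $2$ splits in $L$) the splitting-field property of $L$ forces $2\nmid{\rm disc}(D)$ automatically, as recorded in~(b).

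Finally, for the converse I would run every one of these local computations in reverse. Each of the conditions~(1)--(5) was produced as an equivalence at the corresponding place, and the odd primes are exhausted by the three mutually exclusive cases $p\mid m_0$, $p\mid\Delta_L$, and $p\nmid m_0\Delta_L$ (disjoint since $\gcd(m_0,\Delta_L)=1$), so a natural number $m$ satisfying~(1)--(5) produces a $\theta$ for which $(\Delta_L,\theta)_v=-1$ holds at exactly the places in the ramification set of $D$. That set being finite of even cardinality, the classification of quaternion algebras over $\mathbb Q$ gives $D\simeq\left(\frac{\Delta_L,\theta}{\mathbb Q}\right)$, which finishes the proof. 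The only global ingredients are quadratic reciprocity and the Hilbert reciprocity formula $\prod_v(a,b)_v=1$, the latter being what makes the even-cardinality condition superfluous to check by hand.
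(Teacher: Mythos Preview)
Your proposal is correct and follows essentially the same route as the paper: both arguments compute the Hilbert symbol $(\Delta_L,\theta)_p$ prime by prime using the standard formulas in terms of $\varepsilon$, $\omega$, and Legendre symbols, sorting the odd primes into the three disjoint classes $p\mid m_0$, $p\mid\Delta_L$, $p\nmid m_0\Delta_L$, and then handling $p=2$ by cases on the residue of $\Delta_L$ (equivalently $-d$) modulo powers of $2$. Your write-up supplies a bit more conceptual scaffolding (the classification of rational quaternion algebras by ramification sets, and the splitting-field property of $L$ to explain why $\left(\frac{\Delta_L}{p}\right)=-1$ for $p\mid m_0$), whereas the paper simply records the computations, but the underlying calculation is the same.
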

\begin{proof}
 The proof is a simple calculation using the Hilbert symbol using the following well-known formulas. Decompose two integers $a,b$ as 
 $a=p^{v_p(a)}\tilde a$,  $b=p^{v_p(b)}\tilde b$ and for $x\in\mathbb Z$ denote
  \begin{align*}
    \varepsilon(x)=(x-1)/2, && \omega(x)=(x^2-1)/8.
  \end{align*}
  Then we can calculate the Hilbert symbol for a prime $p$ as
  \begin{align}
    (a,b)_p&=(-1)^{v_p(a)v_p(b)\varepsilon(p)} \left(\frac{\tilde a}{p}\right)^{v_p(b)} \left(\frac{\tilde b}{p}\right)^{v_p(a)},
    && \text{if $p$ is an odd prime},\\
    (a,b)_2&=(-1)^{\varepsilon(\tilde a)\varepsilon(\tilde b)+v_2(a)\omega(\tilde b)+v_2(b)\omega(\tilde a)},&& \text{for $p=2$}.
  \end{align}
  We go through the different primes $p$.
 \begin{enumerate}
  \item This follows as $D$ is indefinite.
  \item If $p\nmid 2m_0 \Delta_L$ we calculate
    \[ (\Delta_L,\theta)_p= \left(\frac{\Delta_L}{p}\right)^{v_p(m)}.\]
    Assume that $p\mid{\rm disc}(D)$ then $\left(\frac{\Delta_L}{p}\right)$ must be odd. But this contradicts the assumption that $p\nmid m_0$.
  \item If $p\mid m_0$ then $p\nmid \Delta_L$ and we calculate
    \[ (\Delta_L,\theta)_p= \left(\frac{\Delta_L}{p}\right)^{v_p(m)+1}.\]
    As $p\mid m_0$ we know that $\left(\frac{\Delta_L}{p}\right)=-1$ and $(\Delta_L,\theta)_p=-1$. We conclude that $v_p(m)$ must be even.
  \item If $p\mid \Delta_L$ odd then $p\nmid m_0$ and we easily calculate the formula as in the statement.
  \item If $p=2$ we have to make a case by case study.
    \begin{enumerate}
     \item If $\Delta_L\equiv 5 \pmod 8$ then 
      \[v_p(\Delta_L)=0,\qquad \varepsilon(\Delta_L)\equiv 0\pmod 2,\qquad \omega(\Delta_L)\equiv 1\pmod 2,\]
	and we have the formula
      \[ (\Delta_L,\theta)_2=(-1)^{v_2(m)}.\]
     \item If $\Delta_L\equiv 1 \pmod 8$ then the prime $p=2$ splits in $L$, whence $2\nmid {\rm disc}(D)$.
     \item If $-d\equiv 3 \pmod 4$ then
      \[\varepsilon(-d)\equiv 1 \pmod 2,\qquad \omega(-d)\equiv_2\left\{ \begin{matrix} 1, & \text{if $-d\equiv 3 \pmod 8$},\\
	    0, & \text{if $-d\equiv 7 \pmod 8$}.\end{matrix}\right.\] 
      We conclude that
      \[ (\Delta_L,\theta)_2=(-1)^{\varepsilon(\tilde\theta)+v_2(m)\omega(-d)}.\]
     \item If $-d$ is even it follows that
      \[ (\Delta_L,\theta)_2=(-1)^{\varepsilon(-d/2)\varepsilon(\tilde\theta)+\omega(\theta)+v_2(m)\omega(-d/2)}.\]
    \end{enumerate}
 \end{enumerate}
\end{proof}

\begin{lemma}\label{lemma.arith}
  Denote $\Delta_L:={\rm disc}(L)$ and $m_0$ the product of odd primes $p\mid{\rm disc}(D)$ with $p\nmid \Delta_L$. 
 Then there exist two numbers $m_1,m_2\in\mathbb N$ without common divisor, coprime to $2m_0\Delta_L$, such that 
  \begin{itemize}
   \item $D\simeq\left(\frac{\Delta_L,m_0m_i}{\mathbb Q}\right)$ for $i=1,2$ if $\Delta_L\not\equiv 5 \pmod 8$,
   \item $D\simeq\left(\frac{\Delta_L,2^s m_0m_i}{\mathbb Q}\right)$ for $i=1,2$ if $\Delta_L\equiv 5 \pmod 8$, where $s=0$ if $2\nmid{\rm disc}(D)$ 
      and $s=1$ if $2\mid{\rm disc}(D)$.
  \end{itemize}
\end{lemma}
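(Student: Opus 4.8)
The plan is to extract from Lemma~\ref{lemma.pre.arith} exactly which $m\in\mathbb N$ yield an isomorphism $D\simeq\left(\frac{\Delta_L,m_0m}{\mathbb Q}\right)$, to specialise those conditions to $m$ coprime to $2m_0\Delta_L$, and then to exhibit two such $m$ that are coprime to one another by invoking the theorem on primes in arithmetic progressions. Concretely, I want to show that the conditions 1.--5.\ of Lemma~\ref{lemma.pre.arith}, once we impose $\gcd(m,2m_0\Delta_L)=1$, describe $m$ as lying in a nonempty union of residue classes modulo some $N$, all of them coprime to $N$; then any two distinct primes in this union do the job.

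First I would observe that for $m$ coprime to $2m_0\Delta_L$ the bookkeeping collapses. Condition~1 is just positivity; condition~2 is automatic, since by the very definition of $m_0$ no odd prime $p\nmid m_0\Delta_L$ can divide ${\rm disc}(D)$; and condition~3 is automatic because $v_p(m)=0$ for every $p\mid m_0$. For each odd prime $p\mid\Delta_L$, condition~4 with $v_p(m)=0$ reduces to the single requirement $\left(\frac{m}{p}\right)=\epsilon_p$, where $\epsilon_p:=\left(\frac{m_0}{p}\right)$ if $p\nmid{\rm disc}(D)$ and $\epsilon_p:=-\left(\frac{m_0}{p}\right)$ if $p\mid{\rm disc}(D)$; in particular this forces $m\not\equiv0\pmod p$. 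Thus conditions 1.--4.\ amount, via the Chinese Remainder Theorem, to prescribing one coset of squares modulo each odd $p\mid\Delta_L$ together with $m\not\equiv0$ modulo each odd $p\mid m_0$, all of which are solvable by units.

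It remains to handle $p=2$ via condition~5. In case $\Delta_L\equiv1\pmod8$ there is no restriction at all; in the cases $-d\equiv3\pmod4$ and $-d$ even, condition~5 imposes a single congruence on $m$ modulo $4$, resp.\ modulo $8$, and a short case analysis of the functions $\varepsilon,\omega$ on $(\mathbb Z/8)^\times$ shows this congruence is always solvable by a unit (both admissible values of the relevant parity are attained as $m$ runs over $(\mathbb Z/8)^\times$, whichever one ${\rm disc}(D)$ dictates). The single exception is $\Delta_L\equiv5\pmod8$ together with $2\mid{\rm disc}(D)$: there condition~5(a) forces $v_2(m)$ to be odd, so no odd $\theta=m_0m$ can work -- this is precisely why the statement allows the factor $2^s$. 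Accordingly set $s=1$ in that one case and $s=0$ otherwise, and apply Lemma~\ref{lemma.pre.arith} with $2^sm$ in place of $m$: this clears the obstruction at $2$ and only replaces each target sign $\epsilon_p$ above by $\left(\frac{2}{p}\right)^s\epsilon_p$. Setting $N:=8\cdot\prod_{p\mid\Delta_L,\,p\text{ odd}}p\cdot\prod_{p\mid m_0,\,p\text{ odd}}p$, the combined conditions describe a nonempty union of unit residue classes modulo $N$; by the theorem on primes in arithmetic progressions it contains infinitely many primes, and choosing two distinct such primes $m_1,m_2$ gives numbers with no common divisor, coprime to $2m_0\Delta_L$, for which the converse direction of Lemma~\ref{lemma.pre.arith} yields $D\simeq\left(\frac{\Delta_L,2^sm_0m_i}{\mathbb Q}\right)$ for $i=1,2$.

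I expect the only genuine work to be the case-by-case reduction of condition~5 at $p=2$: one must verify that the congruence it imposes is always solvable by a unit modulo $8$, and one must isolate the lone case $\Delta_L\equiv5\pmod8$, $2\mid{\rm disc}(D)$ in which the auxiliary factor $2^s$ is unavoidable. Everything else -- the reductions of conditions 1.--4., the use of the Chinese Remainder Theorem, and the appeal to arithmetic progressions -- is routine.
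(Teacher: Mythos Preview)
Your proposal is correct and follows essentially the same route as the paper: reduce the conditions of Lemma~\ref{lemma.pre.arith} to a finite system of congruences modulo $2m_0\Delta_L$ for $m$ coprime to $2m_0\Delta_L$, solve via the Chinese Remainder Theorem, and then invoke the theorem on primes in arithmetic progressions to produce two coprime solutions. The paper's proof is considerably terser---it simply records the congruences and says ``this can be solved via the Chinese Remainder Theorem,'' relying on the introductory remark that the lemma is a corollary of the theorem of arithmetic progression---whereas you spell out the reduction of each condition and the handling of the $2^s$ factor in the case $\Delta_L\equiv5\pmod8$, $2\mid{\rm disc}(D)$; but the underlying argument is the same.
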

\begin{proof}
   We may choose $m_i$ coprime to $m_0\Delta_L$. In this case we must satisfy a finite number of equations modulo $2m_0\Delta_L$, namely
    \begin{align*}
      m_i&\not\equiv 0 \pmod {m_0},\\
      (\Delta_L,\theta)_p&= \left(\frac{m_0}{p}\right)\left(\frac{m_i}{p}\right), && \text{for $p\mid\Delta_L$},
    \end{align*}
    and a congruence relation modulo $2$, $4$ or $16$ (depending on the congruence class of $\Delta_L$ modulo $16$) which is determined by 5. in Lemma \ref{lemma.pre.arith}. This can be solved via the Chinese Remainder Theorem.
\end{proof}
Now we can apply Theorem \ref{haupt} to the case of fake elliptic curves, which gives further information about the endomorphism algebras involved.
\begin{corollary}[of Theorem \ref{haupt}]\label{case:ell}
Let the notation and assumptions be as in Theorem \ref{haupt}. Assume that the
dimension of $A$ is $2$. Let $d\in{\mathbb N}$ be the squarefree integer with
$L\simeq{\mathbb Q}(\sqrt{-d})$ and denote $m_0\in\mathbb N$ the product of odd primes
$p\mid{\rm disc}(D)$ with $p\nmid{\rm disc}(L)$. Then there exist isogenous
elliptic curves $E_e,\;E_{\overline e}$  with complex multiplication by ${\rm
End}(E_e)\simeq{\rm  End}(E_{\overline e})$ such that there is an isogeny 
\begin{align*}
\psi_e\colon A\rightarrow E_e\times E_{\overline e} 
\end{align*}
with ${\rm deg}(\psi_e)\mid (4c^2\cdot d)^2$. Assume furthermore that $D$ is
isomorphic to the quaternion algebra denoted $\left(\frac{L,m_0}{\mathbb
Q}\right)$ from Remark \ref{def:Lu}, that is $D$ is generated as ${\mathbb Q}$
module by $1,x,y,xy\in D$ such that $x^2=-d,\; y^2=m_0,\; xy=yx$. Then $E_e$
and $E_{\overline e}$ are even isomorphic.
\end{corollary}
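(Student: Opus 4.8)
The plan is to prove Corollary \ref{case:ell} in two parts, following the structure of the statement. The first part — the existence of the isogeny $\psi_e\colon A\to E_e\times E_{\overline e}$ with the stated degree bound, and the fact that $E_e$ and $E_{\overline e}$ are isogenous with complex multiplication by $\mathcal O_{L,\mathfrak c}$ — is just the specialization of Theorem \ref{haupt} to $g=2$. Here $K=\mathbb Q$, the conductor $\mathfrak c$ corresponds to an integer $c$, and $\mathrm n_{L/\mathbb Q}(\alpha)$ for a trace-zero $\alpha\in\mathcal O_{L,c}$ divides (a constant times) $c^2 d$; plugging into the bound $\deg(\psi_{e\alpha})\mid 4^g\cdot \mathrm n_{L/\mathbb Q}(\alpha)^2$ from Theorem \ref{haupt} gives $\deg(\psi_e)\mid (4c^2d)^2$ after choosing $\alpha=\sqrt{-d}$ (so $\mathrm n_{L/\mathbb Q}(\alpha)=d$) when $c=1$, or tracking the conductor otherwise. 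I would state this cleanly and refer back to Theorem \ref{haupt}.

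The substantive content is the last sentence: when $D=\left(\frac{L,m_0}{\mathbb Q}\right)$, the curves $E_e$ and $E_{\overline e}$ are not merely isogenous but isomorphic. My strategy, following the outline sketched in the introduction, is to produce \emph{two} isogenies $u_1,u_2\colon E_e\to E_{\overline e}$ of coprime degree and then invoke Kohel's result (\cite{Kohel:Diss}) that two elliptic curves admitting isogenies of coprime degree have isomorphic endomorphism rings — and since both have the same CM order $\mathcal O_{L,c}$, equal endomorphism rings of CM elliptic curves forces the curves themselves to be isomorphic (they lie in the same isogeny class and have the same $j$-invariant up to the finitely many curves with that order; more precisely, over $\mathbb C$ or a finite field, an elliptic curve with CM by a fixed order is determined up to isomorphism by that order together with the relevant class-group data, and coprime-degree isogenies identify them). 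The first isogeny is $\tilde u=u|_{E_e}$ from the proof of Theorem \ref{haupt}, where $u\in D$ satisfies $u^2=\theta$; with the explicit presentation $x^2=-d$, $y^2=m_0$, $xy=yx$ we may take $u=y$, so $\theta=m_0$ and $\deg(\tilde u)\mid \mathrm n_{\mathbb Q}(m_0)=m_0$. For the second isogeny I would exhibit another element $u'\in D$ with $u'\iota(m)=\iota(\overline m)u'$, i.e. another anti-commuting element, for instance $u'=xy$ (which satisfies $(xy)^2=-d\,m_0$ and anticommutes with $x=\iota(\alpha)$ up to the right sign), or a suitable $L$-combination; restricting $u'$ to $E_e$ gives $\tilde u'\colon E_e\to E_{\overline e}$ of degree dividing $d\cdot m_0$ or $d$. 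The point is to arrange that $\gcd$ of the two degrees is $1$: since the first has degree dividing $m_0$ and one can choose the second to have degree dividing a power of $d$ (using that $\gcd(d,m_0)=1$ because $m_0$ is a product of primes \emph{not} dividing $\mathrm{disc}(L)$, hence not dividing $d$), the degrees are coprime.

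The main obstacle I anticipate is the bookkeeping needed to guarantee the \emph{exact} coprimality of the two degrees rather than merely "divides a product of small numbers." One must be careful that restricting $u'\in D$ to the subvariety $E_e=e\eta(A)$ really is an isogeny $E_e\to E_{\overline e}$ (not something degenerate) and that its degree is controlled — this uses, as in Theorem \ref{haupt}, Shimura's degree formula $\deg(u')=\mathrm n_{D/\mathbb Q}(u')$ together with the compatibility $\overline{e}u'=u'e$, which in turn constrains which $u'$ are admissible. I would handle this by choosing $u'$ inside $\iota(L)\cdot u = \iota(L)\cdot y$, say $u'=\iota(\sqrt{-d})\,y = xy$, check directly from the relations $x^2=-d,\ y^2=m_0,\ xy=yx$ that $(xy)^2=-d\,m_0$ and that $xy$ anticommutes with $\iota(\alpha)=x$ after adjusting signs (so that it intertwines $e$ and $\overline e$), and conclude $\deg(\tilde u')\mid d\,m_0$; then combine with a third admissible element if necessary to isolate a degree dividing a pure power of $d$. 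Once the two coprime-degree isogenies are in hand, Kohel's theorem closes the argument, and the equality of endomorphism rings upgrades to an isomorphism of the elliptic curves because a CM elliptic curve is determined up to isomorphism within its isogeny class by its endomorphism ring when, as here, we have an explicit isogeny realizing the identification of those rings.
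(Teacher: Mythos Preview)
There are two genuine gaps.

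First, you have overlooked the main claim. The assertion ${\rm End}(E_e)\simeq{\rm End}(E_{\overline e})$ is made \emph{without} the extra hypothesis on $D$, and it does not follow from Theorem~\ref{haupt}: that theorem only gives $\mathcal O_{L,c}\subset{\rm End}(E_e)$ and $\mathcal O_{L,c}\subset{\rm End}(E_{\overline e})$, so a priori the two rings could be distinct orders $\mathcal O_{L,c_1},\mathcal O_{L,c_2}$ with $c_i\mid c$. This equality is where the paper deploys Kohel's coprime-degree criterion --- not for the final sentence. It requires an arithmetic input (Lemma~\ref{lemma.arith}, ultimately Dirichlet) to produce coprime integers $m_1,m_2$, prime to $2m_0\Delta_L$, with $D\simeq\bigl(\frac{\Delta_L,\,m_0m_i}{\mathbb Q}\bigr)$, hence $u_i\in\mathcal O_D$ anticommuting with $\iota(L)$ and satisfying $u_i^2=m_0m_i$. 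The restrictions $\varphi_i=u_i|_{E_e}\colon E_e\to E_{\overline e}$ still share the factor $m_0$ in their degrees, so the paper must further factor each $\varphi_i$ and argue, using that every prime $\ell\mid m_0$ is inert in $L$ and prime to $c$, that the $\ell$-primary pieces are endomorphisms (\cite[Prop.~21, 23]{Kohel:Diss}); only then do the residual isogenies have coprime degrees dividing $m_1^2,m_2^2$, and \cite[Prop.~22]{Kohel:Diss} gives ${\rm End}(E_e)\simeq{\rm End}(E_{\overline e})$.

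Second, your route to the final sentence contains a false step: two isogenous CM elliptic curves with the same endomorphism ring are \emph{not} in general isomorphic --- there are $h(\mathcal O)$ isomorphism classes with ${\rm End}\simeq\mathcal O$, permuted simply transitively by the class group --- so a coprime-degree argument plus Kohel cannot yield $E_e\simeq E_{\overline e}$. (Your proposed $u=y$, $u'=xy$ have squares $m_0$ and $dm_0$, which are not coprime anyway; every element anticommuting with $\iota(L)$ has square in $m_0\cdot{\rm n}_{L/\mathbb Q}(L^\times)$, so one cannot avoid $m_0$ by an elementary choice.) The paper's argument for the isomorphism is instead a byproduct of the factorization above: under the extra hypothesis $D\simeq\bigl(\frac{L,m_0}{\mathbb Q}\bigr)$ one may take $m_1=1$, and after stripping the endomorphism factors from $\varphi_1$ the residual map $E_e\to E_{\overline e}$ has degree dividing $m_1^2=1$.
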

\begin{proof}
Without loss of generality, the order ${\mathcal O}_{L,c}$ is given by ${\mathbb
Z}+c\cdot {\mathcal O}_{L}$, where ${\mathcal O}_{L}$ denotes the maximal order
in $L$. We choose $\alpha:=2c\cdot \sqrt{-d}\in{\mathcal O}_{L,c}$ in the
statement of Theorem \ref{haupt} and conclude that there exists an isogeny
$\psi_e\colon A\rightarrow E_e\times E_{\overline e}$ with ${\rm
deg}(\psi_e)\mid{\rm n}_{L/{\mathbb Q}}(\alpha)=4c^2\cdot d$, where
$E_e:=\tilde A_e$ (resp.\ $E_{\overline e}:=\tilde A_{\overline e}$) in the
notation of Theorem \ref{haupt} .\\
We want to show that
${\rm End}(E_e)\simeq{\rm End}(E_{\overline e}) \supset{\mathcal O}_{L,c}$. 
First we consider the case that $\Delta_L\not\equiv 5 \pmod 8$.
Let $m_i$ be two integers as in Lemma \ref{lemma.arith}. Hence there exist elements 
$u_i\in D$ for $i=1,2$ with $u_i^2=\theta_i:=m_im_0$ which satisfy Eq.~(\ref{def:u}) 
with respect to the embedding 
$\iota_e\colon {\mathcal O}_{L,c}\hookrightarrow {\mathcal O}_D$. 
Without loss of generality, we can assume that $u_i\in {\mathcal O}_D$.
Now, we consider the isogenies 
$\psi_{\rm QM}(u_1),\psi_{\rm QM}(u_2)\colon A\rightarrow A$ of the proof of
Theorem \ref{haupt} and their restrictions $\varphi_i:=\psi_{\rm QM}(u_1)|_{E_e}$ to $E_{e}$. 
As $u_i^2=\theta_i$ we conclude that ${\deg}(\varphi_i)$ divides $\theta_i^2=m^2m_i^2$. We want 
to decompose $\varphi_i$ into isogenies of degree $l^2$, where $l\mid m_0$ 
is a prime, or $\varphi_i$ has degree $m_i$.  
In concrete terms, let $m_0=\prod\limits_{j=1\dots n} l_j$ be a prime decomposition of $m_0$.
We factor $\varphi_i$ as composite 
\begin{align*}
  E_{e}\xrightarrow{\varphi_{i,0}} E_{i,1} \xrightarrow{\varphi_{i,1}} \dots \ \xrightarrow{\varphi_{i,n}} E_{\overline e},
\end{align*}
where $\varphi_{i,n}$ is an isogeny of degree ${\rm deg}(\varphi_{i,n})\mid m_i^2$, and the other isogenies
$\varphi_{i,j}$ for $j=0,\dots,n-1$ are isogenies of degree ${\rm deg}(\varphi_{i,j})\mid l_{j(i)}$.
First we consider the isogenies $\varphi_{i,j}$ for $j=0,\dots,n-1$. As was mentioned in the last section, 
$m_0$ and $c$ are coprime as ${\mathcal O}_{L,c}$ embeds optimally into $\mathcal O_D$. From Theorem 
\ref{haupt} follows that the endomorphism rings of $E_{e}$ and $E_{\overline e}$ are maximal at 
$l_{j(i)}$. We conclude by \cite[Prop.\ 21]{Kohel:Diss} that ${\rm End}(E_{e})={\rm End}(E_{i,j})$. 
We have chosen $m_0$ such that $l_j$ is inert. Then by 
\cite[Prop.\ 23]{Kohel:Diss} the isogenies $\varphi_{i,j}$ must be isomorphisms or 
${\rm deg}(\varphi_{i,j})=l_{j(i)}^2$, so in any case they are endomorphisms.
Next we consider the isogenies $E_{e}\xrightarrow{\varphi_{i,n}} E_{\overline e}$. By construction
the degree of $\varphi_{1,n}$ and the degree of $\varphi_{2,n}$ are coprime. By \cite[Prop.\
22]{Kohel:Diss} we conclude that the endomorphism rings of $E_{e}$ and $E_{\overline e}$ are 
isomorphic. Considering the last statement, if $\theta_i=m$ for one $i$, that is if 
$D=\left(\frac{L,m}{\mathbb Q}\right)$, then $\varphi_{i,n}\colon E_e\rightarrow E_{\overline e}$ is 
an isomorphism.\\
Now we treat the case $\Delta_L\equiv 5 \pmod 8$. If $2\nmid{\rm disc}(D)$ then nothing changes. So assume
that $2\mid{\rm disc}(D)$. In this case $2$ is inert in $L$, and as $2\nmid{\rm disc}(D)$ we know that 
$2$ and $c$ are coprime. We conclude as above that $\varphi_{i,n}$ must factor as composite of an endomorphism
of $E_{e}$ and an isogeny $\tilde \varphi_{i,n}\colon E_e\rightarrow E_{\overline e}$ of degree 
${\rm deg}(\tilde\varphi_{i,j})\mid m_i^2$. As the $m_i$ are coprime we conclude that 
the endomorphism rings of $E_{e}$ and $E_{\overline e}$ are isomorphic.
\end{proof}
\begin{remark}
\begin{enumerate}
 \item The statements \cite[Prop.\ 21]{Kohel:Diss} and \cite[Prop.\
  22]{Kohel:Diss} concern ordinary elliptic curves over finite fields. But
  the statement also hold in characteristic zero.
 \item Also note, that by the proof of the corollary in any case there exists a
    integer $m$ as in Lemma \ref{lemma.arith} such that
    $D=\left(\frac{L,mm_0}{\mathbb Q}\right)$ and an isogeny
    $\gamma\colon{E_{e}}\rightarrow {E_{\overline e}}$ 
    of degree $m$. Hence, there exists an isogeny 
    \begin{align*}
      \tilde{\psi_e}\colon A\rightarrow \tilde E^2
    \end{align*}
    of degree ${\rm deg}(\psi_e)\mid m\cdot(4c^2\cdot d)^2$, where
    $\tilde{\psi_e}:=(\gamma,{\rm id})\circ\psi_e$ and
    $\tilde E=E_{\overline e}$ is an elliptic curve with ${\rm
    End}(\tilde E)\supset \mathcal O_{L,c}$.
\end{enumerate}
\end{remark}
For the rest of the paragraph we fix the following notation. Let $A$ denote an abelian 
surface of type QM+CM over a field $F$. If ${\rm char}(F)$
is positive we also assume that $A$ is ordinary and hence, without loss of generality, that
$F$ is a finite field. Denote by $\psi\colon {\mathcal O}_D\rightarrow {\rm End}(A)$ its 
QM type. By Proposition \ref{pro:QMCM} $A$ is isogenous to a product $E^2$ of an elliptic 
curve of type CM by a field $L$,
which is isomorphic to the center of ${\rm End}^0(A)$. We fix an elliptic curve
in the isogeny class of $E$ with ${\rm End}(E)\simeq {\mathcal O}_{L,c}$, where
we denote by $c$ the conductor of the center of ${\rm End}(A)$ in its maximal
order ${\mathcal O}_{L}$. We call $c$ the \emph{central conductor} of $A$. By
abuse of notation, let 
$\psi\colon {\mathcal O}_D\otimes{\mathcal O}_{L,c}\rightarrow {\rm End}(A)$
also denote the prolongation of 
$\psi\colon {\mathcal O}_D\rightarrow {\rm End}(A)$ to ${\mathcal
O}_D\otimes{\mathcal O}_{L,c}$.\\
We use \cite{Kani:CMProd} to give a description of $A$ which is in some sense
dual to the description of Corollary \ref{case:ell}. 
\begin{theorem}\label{thm:Kani}
Let $A$ denote an abelian surface of type QM+CM over the field $F$, which is ordinary 
if ${\rm char}(F)>0$. Let $E$, $c$ be as above. Then there exists a non-trivial idempotent 
\[e_1\in\psi^{-1}({\rm End}(A))\in D\otimes_{\mathbb Q} L\]
and an elliptic curve $E'$ (unique up to isomorphism) such that 
\begin{align*}
 A\xrightarrow{(\psi(e_1),\psi(\overline{e_1}))}E\times E'
\end{align*}
defines an isomorphism of abelian surfaces, where $\overline{e_1}=1-e_1$.
Furthermore, if we denote by $c'$ the conductor of ${\rm End}(E')$ then $c'\mid
c$ holds.
\end{theorem}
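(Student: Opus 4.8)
The plan is to invoke the main result of \cite{Kani:CMProd}, which classifies principally polarized abelian surfaces isomorphic to a product of two CM elliptic curves, and to identify the idempotent coming from that classification with the idempotent $e_1=\psi(e)$ from Corollary~\ref{case:ell}. First I would recall from Proposition~\ref{pro:QMCM} and the setup preceding the theorem that $A$ is isogenous to $E^2$ with $E$ a CM elliptic curve whose endomorphism field $L$ is the center of ${\rm End}^0(A)$, and that $\psi$ extends to an embedding $\psi\colon{\mathcal O}_D\otimes{\mathcal O}_{L,c}\hookrightarrow{\rm End}(A)$. By the remark following Corollary~\ref{case:ell}, we may choose the quaternion presentation $D=\left(\frac{L,mm_0}{\mathbb Q}\right)$ and obtain an isogeny $\tilde\psi_e\colon A\to \tilde E^2$ with $\tilde E=E_{\overline e}$ a CM elliptic curve with ${\rm End}(\tilde E)\supset{\mathcal O}_{L,c}$; in particular $A$ sits in the isogeny class of a square of a CM elliptic curve, which is exactly the input required by \cite{Kani:CMProd}.

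Next I would produce the idempotent. Since $A$ is principally polarized and of type QM+CM, the Rosati involution (coming from the positive involution $\dagger$ on $D$) restricts on the subalgebra $\psi(D\otimes_{\mathbb Q}L)\simeq M_2(L)\cap{\rm End}(A)$ to an involution; the idempotent $e$ of Lemma~\ref{idem} satisfies $\overline e=1-e$ with respect to the standard involution, and $2e\alpha\in\iota({\mathcal O}_{L,\mathfrak c})\otimes{\mathcal O}_{L,\mathfrak c}$, so after the normalisation in Corollary~\ref{case:ell} (taking $\alpha=2c\sqrt{-d}$ and using $D=\left(\frac{L,m_0}{\mathbb Q}\right)$) one checks that $e\alpha$, hence a suitable rational multiple, already lies in $\psi^{-1}({\rm End}(A))$; combined with the isomorphism statement of Corollary~\ref{case:ell} (that $E_e\simeq E_{\overline e}$ in that presentation) one gets an \emph{isomorphism} $A\xrightarrow{\sim}E_e\times E_{\overline e}$ rather than merely an isogeny, provided we track degrees carefully. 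I would then set $e_1$ to be the corresponding idempotent in $D\otimes_{\mathbb Q}L$ lifting the projection onto the first factor, $E:=E_e$, $E':=E_{\overline e}$, so that $(\psi(e_1),\psi(\overline{e_1}))$ is exactly the product isomorphism; uniqueness of $E'$ up to isomorphism is then automatic from the Krull--Schmidt property for abelian varieties (a principally polarized abelian surface which is a product of two elliptic curves determines the unordered pair of factors up to isomorphism).

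The remaining and, I expect, the genuinely delicate point is the conductor inequality $c'\mid c$, where $c'$ is the conductor of ${\rm End}(E')$. Here I would argue as follows: the idempotents $e_1,\overline{e_1}$ are conjugate under the element $u\in D$ with $u^2=m_0$ satisfying Eq.~(\ref{def:u}), so $\psi(u)$ restricts to an isogeny $E\to E'$ of degree dividing ${\rm n}_{K/\mathbb Q}(m_0)=m_0$ (this is the computation $\tilde u:=u|_{\tilde A_e}$ from the proof of Theorem~\ref{haupt}), and symmetrically $E'\to E$; since $\gcd(m_0,c)=1$ by optimality of the embedding ${\mathcal O}_{L,c}\hookrightarrow{\mathcal O}_D$, Kohel's results \cite[Prop.\ 21--23]{Kohel:Diss} force ${\rm End}(E)$ and ${\rm End}(E')$ to agree at every prime dividing $m_0$, hence ${\rm End}(E')\supseteq{\mathcal O}_{L,c}$, i.e.\ $c'\mid c$. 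The main obstacle is bookkeeping: one must be sure that the idempotent provided by \cite{Kani:CMProd} is \emph{the same} $e_1$ (up to the conjugation ambiguity noted after Proposition~\ref{pro:QMCM}) as the one coming from the optimal embedding, so that the polarization compatibility in Kani's theorem and the endomorphism-ring information from Corollary~\ref{case:ell} refer to the same decomposition; establishing this identification — rather than the individual facts about degrees and conductors — is where the real work lies.
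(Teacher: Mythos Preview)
Your proposal has a genuine gap at the central step: you assert that ``combined with the isomorphism statement of Corollary~\ref{case:ell} (that $E_e\simeq E_{\overline e}$ in that presentation) one gets an \emph{isomorphism} $A\xrightarrow{\sim}E_e\times E_{\overline e}$ rather than merely an isogeny, provided we track degrees carefully.'' This does not follow. Corollary~\ref{case:ell} only produces an isogeny $\psi_e$ of degree dividing $(4c^2d)^2$; the additional fact that the two target factors are isomorphic says nothing about $\ker\psi_e$. For $(\psi(e_1),\psi(\overline{e_1}))$ to be an isomorphism you would need the idempotent itself (not $e\eta$) to lie in ${\rm End}(A)$, and the explicit formula $e=\tfrac12(1\otimes1+\iota(\alpha)^{-1}\otimes\alpha)$ shows there is no reason to expect this. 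Your suggestion that ``a suitable rational multiple'' of $e\alpha$ lands in $\psi^{-1}({\rm End}(A))$ is true but irrelevant: a rational multiple of an idempotent is no longer idempotent. A second, smaller gap is your appeal to ``the Krull--Schmidt property for abelian varieties'' for uniqueness of $E'$; Krull--Schmidt fails for abelian varieties in general, and cancellation $E\times E'\simeq E\times E''\Rightarrow E'\simeq E''$ for CM elliptic curves is precisely one of the nontrivial outputs of \cite{Kani:CMProd}.

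The paper's own argument avoids all of this. It applies \cite[Theorem~2]{Kani:CMProd} directly to obtain $A\simeq E_1\times E_2$ as an isomorphism (not an isogeny), then invokes the classification \cite[Theorem~67]{Kani:CMProd} to arrange $E_1=E$ and to get uniqueness of $E'$; the idempotent $e_1$ is simply the pullback of the first projection, with no need to match it against the idempotent of Lemma~\ref{idem}. The conductor divisibility $c'\mid c$ is then a one-line observation: $c$ is by definition the conductor of the center of ${\rm End}(A)\simeq{\rm End}(E\times E')$, and the center embeds diagonally into ${\rm End}(E')\simeq\mathcal O_{L,c'}$, so $\mathcal O_{L,c}\subset\mathcal O_{L,c'}$. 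Your detour through Kohel's results and isogenies of degree $m_0$ is unnecessary, and your identification of ``where the real work lies'' (matching the two idempotents) is work the theorem does not actually require.
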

\begin{proof}
  The abelian surface $A$ is isogenous to the product $E^2$ of the elliptic
curve $E$ of type CM. By \cite[Theorem 2]{Kani:CMProd} $A$ is isomorphic to the
product $E_1\times E_2$ of two isogenous elliptic curves of type CM (recall that
we assumed $A$ to be ordinary). The fact that we can choose $E_1$ to be $E$ as
above follows from the classification of products of elliptic curves in
\cite{Kani:CMProd} (see \cite[Theorem 67]{Kani:CMProd}). This determines $E':=E_2$
up to isomorphism. The fact that $c'\mid c$ easily follows as $c$ is the
conductor of the center of ${\rm End}(E\times E')$.
\end{proof}
\begin{remark}
 The situation for QM+CM abelian varieties $A$ of dimension $g\geq 4$ is more
difficult. Over ${\mathbb C}$ there exist abelian varieties $A$ of type
QM+CM with non-product varieties in its isogeny class (see \cite[Satz
0.1]{Schoen}).
\end{remark}
We denote for subsets $R,S\subset L$
\[ (R:S)_L:=\{x\in L\colon xS\subset R \} .\]
Given a product $E\times E'$ and an isogeny $\pi\colon E\rightarrow E'\simeq E/E[I]$ with
${\rm End}(E)\simeq \mathcal O_{L,c}$ and ${\rm End}(E')\simeq (I:I)_L \simeq \mathcal O_{L,c'}$
we can identify
\[{\rm End}(E\times E')\simeq\begin{pmatrix} \mathcal O_{L,c} & (I:\mathcal O_{L,c})_L\\%
			(\mathcal O_{L,c}:I)_L & \mathcal
O_{L,c'}\end{pmatrix}\]
via $\pi$.\par
Let $e$ denote the idempotent corresponding to the projection to
the first factor of 
$E\times E'$. Then $\psi\colon D\rightarrow {\rm End}^0(E\times E')$
satisfies $\psi(\mathcal O_D)\subset {\rm End}(E\times E')$ if and 
only if under this identification
\begin{align*}
 e\psi(\mathcal O_D)e&\subset \mathcal O_{L,c}\;e,& 
 e\psi(\mathcal O_D)\overline e &\subset (I:\mathcal O_{L,c})_L\;\tilde e,\\
 \overline e\psi(\mathcal O_D)e &\subset (\mathcal O_{L,c}:I)_L\;\tilde e^t, &
 \overline e\psi(\mathcal O_D)\overline e&\subset \mathcal O_{L,c'}\;\overline e,
\end{align*}
where $\tilde e=\left(\begin{smallmatrix}0&1\\0&0\end{smallmatrix}\right)$.\par
This gives rise to the following description of abelian surfaces of type QM+CM with
central conductor $c$ up to isomorphism.
\begin{corollary}[ {of Theorem \ref{thm:Kani}} ]\label{cor.of.Kani}
Denote $F$ an algebraically closed field of characteristic zero and $E$ a fixed 
elliptic curve $E$ with ${\rm End}(E)\simeq {\mathcal O}_{L,c}$. Let $\mathcal A_{D,c}$ be 
the set of abelian surfaces $A$ of type QM+CM over $F$ with central 
conductor $c$ up to isomorphism as abelian surfaces. Denote by $\mathcal X_{D,c}$ 
the set of elliptic curves $E'$ (up to isomorphism) with 
${\rm End}(E')\supset {\mathcal O}_{L,c}$, such that ${\mathcal O}_D$ embeds 
in ${\rm End}(E\times E')$. Then $A\simeq E\times E'
\mapsto E'$ defines a bijective mapping between $\mathcal A_{D,c}$ and
$\mathcal X_{D,c}$.
\end{corollary}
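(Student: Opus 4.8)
The plan is to produce the inverse map explicitly and check that the two assignments are mutually inverse. First I would verify that $A \mapsto E'$ is well defined on $\mathcal A_{D,c}$: given $A$ of type QM+CM with central conductor $c$, Theorem \ref{thm:Kani} gives an isomorphism $A \xrightarrow{\sim} E \times E'$ with $E'$ determined up to isomorphism, and the endomorphism structure forces ${\rm End}(E') \supset \mathcal O_{L,c}$ (the center of ${\rm End}(E \times E')$ has conductor $c$, and $c' \mid c$ as in the theorem). Moreover, since $\mathcal O_D$ embeds in ${\rm End}(A) \simeq {\rm End}(E \times E')$ via $\psi$, we have $E' \in \mathcal X_{D,c}$. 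Conversely, given $E' \in \mathcal X_{D,c}$, by definition $\mathcal O_D$ embeds in ${\rm End}(E \times E')$, and $E \times E'$ carries a principal polarization (product of the canonical ones on the CM elliptic curves) whose Rosati involution restricts to a positive involution on $D$; after replacing $\psi$ by a conjugate if necessary so that the Rosati involution matches $\dagger$, the pair $(E \times E', \psi)$ is an abelian surface of type QM+CM, and its central conductor is $c$ because the center of ${\rm End}(E \times E')$ is $\mathcal O_{L,c} \cap \mathcal O_{L,c'} = \mathcal O_{L,c}$ (as $c' \mid c$, which is automatic here since ${\rm End}(E') \supset \mathcal O_{L,c}$). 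This defines $E' \mapsto E \times E'$ from $\mathcal X_{D,c}$ to $\mathcal A_{D,c}$.

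Next I would check the two compositions. Starting from $E' \in \mathcal X_{D,c}$, forming $E \times E'$ and applying Theorem \ref{thm:Kani} to it: since $E \times E'$ is already a product of two CM elliptic curves with $E$ the chosen representative of conductor $c$, the idempotent $e_1$ of the theorem can be taken to be the projection $e$ onto the first factor, and the second factor it produces is $E'$ itself; hence $E \times E' \mapsto E'$, so one composition is the identity. Starting from $A \in \mathcal A_{D,c}$, we get $E'$ and then $E \times E'$; but Theorem \ref{thm:Kani} furnishes an isomorphism $A \xrightarrow{\sim} E \times E'$ of abelian surfaces (compatible with the QM structure, since $(\psi(e_1), \psi(\overline{e_1}))$ is built from the endomorphism algebra), so $A$ and $E \times E'$ represent the same class in $\mathcal A_{D,c}$; the other composition is the identity as well.

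The main obstacle is the well-definedness and bijectivity on the level of \emph{isomorphism classes}, i.e.\ making sure the maps do not depend on choices. On the $\mathcal A_{D,c} \to \mathcal X_{D,c}$ side this is handled by the uniqueness clause in Theorem \ref{thm:Kani} ($E'$ unique up to isomorphism) together with the normalization that $E$ is a \emph{fixed} curve with ${\rm End}(E) \simeq \mathcal O_{L,c}$, which pins down the first factor by \cite[Theorem 67]{Kani:CMProd}. On the $\mathcal X_{D,c} \to \mathcal A_{D,c}$ side one must check that two isomorphic curves $E' \simeq E''$ give isomorphic surfaces $E \times E' \simeq E \times E''$ (clear) and that the QM-type is well defined up to the equivalence used in defining $\mathcal A_{D,c}$; here the relevant point is that in characteristic zero the choice of embedding $\mathcal O_D \hookrightarrow {\rm End}(E \times E')$ and of compatible principal polarization is unique up to isomorphism of the polarized variety, so the class in $\mathcal A_{D,c}$ is unambiguous. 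I expect the bookkeeping around the polarization and the Rosati condition to be the one place where a little care (rather than a formal citation) is needed, and I would isolate it as a short lemma if it becomes lengthy.
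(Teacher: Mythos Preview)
Your argument is correct and covers the same ground as the paper, but you take a more explicit route. The paper's proof is two lines: it uses Theorem~\ref{thm:Kani} to see that $A\mapsto E'$ is well defined, and then invokes \cite[Theorem~67]{Kani:CMProd} directly to conclude injectivity (surjectivity being immediate from the definition of $\mathcal X_{D,c}$). You instead build the inverse map $E'\mapsto E\times E'$ by hand and verify both compositions are identities. This is perfectly valid and arguably more self-contained, at the cost of being longer; the paper's approach is faster precisely because Kani's classification theorem already encodes the uniqueness of the pair $(E,E')$ up to isomorphism, so no separate check of the compositions is needed.

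One remark on your final paragraph: you devote care to the polarization and the Rosati condition when showing $E\times E'$ lies in $\mathcal A_{D,c}$. Note, however, that $\mathcal A_{D,c}$ is defined up to isomorphism \emph{as abelian surfaces}, so you only need the existence of \emph{some} principal polarization and \emph{some} embedding of $\mathcal O_D$ compatible with $\dagger$, not uniqueness. The paper does not address this point at all (it treats surjectivity as tautological), so your instinct to flag it is reasonable, but the lemma you anticipate is lighter than you suggest: the product polarization and a Skolem--Noether conjugation suffice, and no uniqueness statement is required.
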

\begin{proof}
 Given an (ordinary) abelian variety $A$ of type QM+CM with central conductor $c$. Then
there exists an isomorphism $A\simeq E\times E'$ for some (ordinary) elliptic curve $E'$
isogenous to $E$ with ${\rm End}(E')\supset {\rm End}(E)$ by Theorem
\ref{thm:Kani}. It is injective by \cite[Theorem 67]{Kani:CMProd}.
\end{proof}
\begin{remark}
The statement of Theorem \ref{cor.of.Kani} holds true for $F$ an algebraically closed 
field of positive characteristic if we add the preposition ordinary to all abelian varieties 
occuring in the statement.
\end{remark}
In the following we define a action of the ideal class group ${\rm Id}({\mathcal O}_{L,c})/\simeq$ 
on $\mathcal X_{D,c}$, or equivalently on $\mathcal A_{D,c}$. For the convenience of the 
reader we repeat the following well-know fact, which can also be found in \cite{Kani:CMProd}.

\begin{proposition}[ {\cite[Cor.\ 21]{Kani:CMProd}} ]\label{kani:bijection}
  Let $E$ be a fixed CM elliptic curve with ${\rm End}(E)\simeq {\mathcal O}_{L,c}$. Denote by
  ${\rm Isog}^+(E)$ the set of elliptic curves isogenous to $E$ with ${\rm End}(E')\supset {\rm End}(E)$.
  Then the map
      \begin{align*}
	I_E^+\colon {\rm Id}({\mathcal O}_{L,c})/\simeq &\rightarrow {\rm Isog}^+(E)\\
	I'&\mapsto E':=E/E[I'].
      \end{align*} 
   defines a bijection of sets.
\end{proposition}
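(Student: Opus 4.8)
The plan is to establish the bijection in Proposition~\ref{kani:bijection} by checking that the map $I_E^+$ is well-defined, surjective, and injective, using the standard theory of CM elliptic curves and their lattice/ideal descriptions.

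First I would recall the basic dictionary: an elliptic curve $E'$ isogenous to $E$ corresponds (over $\mathbb C$, or via the analogous Serre--Tate/Deligne theory over a finite field in the ordinary case) to a proper fractional ${\mathcal O}_{L,c}$-ideal up to scaling, and every isogeny $E\to E'$ arises as $E\to E/E[I']$ for an integral ideal $I'\subset{\mathcal O}_{L,c}$ with $E[I']=\bigcap_{\alpha\in I'}\ker\alpha$. The key computation is that ${\rm End}(E/E[I'])=(I':I')_L$, the multiplier ring of $I'$. Thus $E/E[I']$ has endomorphism ring containing ${\mathcal O}_{L,c}$ precisely because $I'$ is an ${\mathcal O}_{L,c}$-module, so $(I':I')_L\supset{\mathcal O}_{L,c}$; this shows $I_E^+$ lands in ${\rm Isog}^+(E)$, i.e. well-definedness. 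One also needs that $I_E^+$ depends only on the class of $I'$ in ${\rm Id}({\mathcal O}_{L,c})/\!\simeq$: if $I''=\lambda I'$ for $\lambda\in L^\ast$, then multiplication by $\lambda$ induces an isomorphism $E/E[I']\xrightarrow{\sim}E/E[I'']$, so the target curve is unchanged up to isomorphism.

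Next, surjectivity: given $E'\in{\rm Isog}^+(E)$, pick any isogeny $\varphi\colon E\to E'$; then $\ker\varphi$ is a finite ${\rm End}(E)={\mathcal O}_{L,c}$-submodule of $E$, and I would set $I':=\{\alpha\in{\mathcal O}_{L,c}\colon \alpha(E)\subset\ker\varphi\}$... more precisely, one takes $I'$ to be the ideal of ${\mathcal O}_{L,c}$ annihilating the (cyclic, after possibly composing with an isogeny of degree prime to the relevant primes) kernel, so that $E[I']=\ker\varphi$ and hence $E/E[I']\simeq E'$. Here one uses that $E[I']$ is determined by $I'$ and that ${\mathcal O}_{L,c}$-submodules of $L/{\mathcal O}_{L,c}$ (the module structure of torsion points) are classified by integral ${\mathcal O}_{L,c}$-ideals; this is where the hypothesis ${\rm End}(E')\supset{\rm End}(E)$ enters, guaranteeing $I'$ is a proper (invertible at the conductor) ideal. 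For injectivity, suppose $E/E[I_1']\simeq E/E[I_2']$; composing the quotient maps with this isomorphism and with the dual isogeny gives an element of $L^\ast$ realizing $I_2'=\lambda I_1'$, so the classes agree in ${\rm Id}({\mathcal O}_{L,c})/\!\simeq$.

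The main obstacle is the bookkeeping with non-maximal orders: for the maximal order ${\mathcal O}_L$ everything is the classical theory of the class group acting simply transitively on the isogeny class, but at the conductor $c$ one must be careful that the relevant ideals are \emph{proper} ${\mathcal O}_{L,c}$-ideals (equivalently, invertible), that $(I':I')_L$ can jump up to a larger order ${\mathcal O}_{L,c'}$ with $c'\mid c$, and that the passage from an arbitrary kernel subgroup to one of the form $E[I']$ uses the structure theorem for finite ${\mathcal O}_{L,c}$-modules. Since Proposition~\ref{kani:bijection} is quoted verbatim from \cite[Cor.\ 21]{Kani:CMProd}, the cleanest route is to cite that reference for the proof and merely indicate the above dictionary; in a self-contained version one would invoke the general theory of CM lattices (e.g. as in the standard references) together with the fact recalled in the paragraph preceding the proposition that ${\rm End}(E/E[I])\simeq(I:I)_L$.
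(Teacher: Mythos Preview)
The paper gives no proof of this proposition at all: it is stated as a quotation of \cite[Cor.~21]{Kani:CMProd}, prefaced by the remark that it is a ``well-known fact'' repeated for the reader's convenience. So there is nothing to compare your argument against on the paper's side; your own final paragraph, where you say the cleanest route is simply to cite \cite{Kani:CMProd}, is exactly what the paper does.

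Your sketch of the underlying argument is the standard one and is essentially correct. The one place where you rightly flag a difficulty is surjectivity at a non-maximal order: not every finite subgroup scheme of $E$ is of the form $E[I']$ for an ${\mathcal O}_{L,c}$-ideal $I'$, and your parenthetical about making the kernel cyclic ``after possibly composing with an isogeny of degree prime to the relevant primes'' is not quite the right fix. What one actually uses is that, under the lattice/ideal dictionary, $E'\in{\rm Isog}^+(E)$ corresponds to a fractional ${\mathcal O}_{L,c}$-submodule $\Lambda'\subset L$ with $(\Lambda':\Lambda')_L\supset{\mathcal O}_{L,c}$, and after scaling one may take $\Lambda'\supset{\mathcal O}_{L,c}$; then $I':=({\mathcal O}_{L,c}:\Lambda')_L$ is an integral ${\mathcal O}_{L,c}$-ideal with $E/E[I']\simeq E'$. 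This is precisely the content of Kani's argument, and it is where the hypothesis ${\rm End}(E')\supset{\rm End}(E)$ is genuinely used. With that correction your outline is fine, but since the paper itself defers entirely to the reference, a self-contained proof is not expected here.
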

The bijection $I_E^+$ of Proposition \ref{kani:bijection} turns ${\rm Isog}^+(E)$ into a principal 
homogeneous space. We describe this more explicitly. Under the bijection $I_E^+$ we may view every 
$E'\in {\rm Isog}^+(E)$ as quotient $\pi\colon E\rightarrow E'=E/E[I']$. On the element $E'$ the action of 
${\rm Id}({\mathcal O}_{L,c})/\simeq$ is then given by $(J,E')\mapsto E/E[JI']$. Let 
$\tau\colon L\xrightarrow{\sim}{\rm End}^0(E)$ denote a fixed embedding. We furthermore fix for $E'$ 
the embedding $L\xrightarrow{\sim} {\rm End}^0(E')$ given by $\alpha\mapsto \pi^{-1}\circ \tau(\alpha) \circ \pi$ 
and denote ${\mathcal O}_{L,c'}$ the endomorphism ring ${\rm End}(E')$ under this identification.
Then the action of ${\rm Id}({\mathcal O}_{L,c})/\simeq$  on ${\rm Isog}^+(E)$ can simply be written 
(\cite[Prop.\ 12]{Kani:CMProd}) as
\[ (J,E')\mapsto E'/E'[{\mathcal O}_{L,c'}J]. \]
When we restrict the action to the Picard group 
\[I\in{\rm Pic}({\mathcal O}_{L,c})=\{I\in{\rm Id}(\mathcal O_{L,c})\colon (I:I)=\mathcal O_{L,c}\}/\simeq\]
then ${\rm End}(E/E[I])\simeq {\mathcal O}_{L,c}$
\begin{proposition}\label{prop.Kani}
We use the notations and assumptions of Corollary \ref{cor.of.Kani}.
\begin{enumerate}
 \item If $E''$ denotes an elliptic curve isogenous to $E$ with ${\rm
    End}(E')\subset{\rm End}(E'')$ for some $E'\in\mathcal X_{D,c}$ then 
    $E''\in\mathcal X_{D,c}$ holds.
 \item Denote $E'\in\mathcal X_{D,c}$. Then $\mathcal X_{D,c'}={\rm Isog}^+(E)$ holds.
\end{enumerate}%
\end{proposition}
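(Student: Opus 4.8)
The plan is to reduce both statements to the matrix-criterion for the embeddability of $\mathcal O_D$ in $\operatorname{End}(E\times E')$ which was spelled out just before the statement of Corollary \ref{cor.of.Kani}, together with the transitivity of the $\operatorname{Id}(\mathcal O_{L,c})/\!\simeq$-action described in Proposition \ref{kani:bijection}. For part (1), suppose $E'\in\mathcal X_{D,c}$ and let $\pi\colon E\to E'=E/E[I']$ be the corresponding quotient, so that $\operatorname{End}(E')\simeq(I':I')_L=\mathcal O_{L,c'}$. Write $E''=E/E[I'']$ with $\mathcal O_{L,c'}\subset\operatorname{End}(E'')=\mathcal O_{L,c''}$, which (by the order-theoretic translation of ``$\operatorname{End}(E')\subset\operatorname{End}(E'')$'') means $c''\mid c'$. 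First I would observe that the four containment conditions
\[
 e\psi(\mathcal O_D)e\subset\mathcal O_{L,c}\,e,\quad
 e\psi(\mathcal O_D)\overline e\subset (I:\mathcal O_{L,c})_L\,\tilde e,\quad
 \overline e\psi(\mathcal O_D)e\subset(\mathcal O_{L,c}:I)_L\,\tilde e^{\,t},\quad
 \overline e\psi(\mathcal O_D)\overline e\subset\mathcal O_{L,c'}\,\overline e
\]
are \emph{monotone} in the two corner orders and the two off-diagonal modules: enlarging $\mathcal O_{L,c'}$ to $\mathcal O_{L,c''}$ (and enlarging $(I:\mathcal O_{L,c})_L$, $(\mathcal O_{L,c}:I)_L$ accordingly, since passing to a larger codomain order only enlarges the Hom-modules) preserves all four inclusions. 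Hence $\mathcal O_D$ still embeds in $\operatorname{End}(E\times E'')$, i.e.\ $E''\in\mathcal X_{D,c}$.

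For part (2), the containment $\mathcal X_{D,c'}\subset{\rm Isog}^+(E)$ is essentially a definition chase: every element of $\mathcal X_{D,c'}$ is by definition an elliptic curve $E''$ with $\operatorname{End}(E'')\supset\mathcal O_{L,c'}\supset\mathcal O_{L,c}$ isogenous to (a fixed curve with endomorphism ring) $\mathcal O_{L,c}$, which is exactly the defining property of ${\rm Isog}^+(E)$ in Proposition \ref{kani:bijection}. The substance is the reverse inclusion ${\rm Isog}^+(E)\subset\mathcal X_{D,c'}$: given $E'\in\mathcal X_{D,c}$ and \emph{any} $E''\in{\rm Isog}^+(E)$, one must show $\mathcal O_D$ embeds in $\operatorname{End}(E\times E'')$. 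By Proposition \ref{kani:bijection} (applied with base curve $E'$, whose endomorphism ring is $\mathcal O_{L,c'}$, together with the ideal-class description of the action) there is an ideal class $J$ of $\mathcal O_{L,c}$ with $E''=E'/E'[\mathcal O_{L,c'}J]$; equivalently $E''=E/E[JI']$. I would then translate the hypothesis $E'\in\mathcal X_{D,c}$ into the four matrix inclusions for the ideal $I=I'$, and show that the same four inclusions hold with $I$ replaced by $JI'$. Here is the mechanism: multiplying the bottom row of the matrix by $J$ and the right column by $J^{-1}$ is a conjugation of $\operatorname{End}(E\times E')$ by $\operatorname{diag}(1,J)$ that carries it isomorphically onto $\operatorname{End}(E\times E'')$, and it carries the idempotent $e$ (projection to the first factor) to itself; so the embedding $\psi$ of $\mathcal O_D$ intertwined with projection to $E$ is transported verbatim. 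More concretely, $e\psi(\mathcal O_D)e$ and $\overline e\psi(\mathcal O_D)\overline e$ are unchanged, $(JI':\mathcal O_{L,c})_L=J\,(I':\mathcal O_{L,c})_L$ and $(\mathcal O_{L,c}:JI')_L=J^{-1}(\mathcal O_{L,c}:I')_L$, and the factors of $J^{\pm1}$ cancel against the $J$-scaling of the off-diagonal entries, so all four conditions survive.

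The step I expect to be the main obstacle is making the conjugation-by-$\operatorname{diag}(1,J)$ argument fully rigorous when $J$ is a non-principal ideal class, since then $\operatorname{diag}(1,J)$ is not literally an invertible matrix over a ring but rather induces a Morita-type equivalence between the two endomorphism orders; one has to check that this equivalence (i) really does send the standard idempotent to the standard idempotent and (ii) is compatible with the concrete identification of $\operatorname{End}(E\times E'')$ via the fixed quotient map, so that the transported copy of $\mathcal O_D$ is the one coming from the QM-structure on $E\times E''$ and not merely an abstract isomorphic copy. I would handle this by working throughout with the explicit quotient presentations $\pi\colon E\to E'$ and $\pi'\colon E'\to E''$ and tracking how $\operatorname{Hom}(E,E'')$, $\operatorname{Hom}(E'',E)$ sit inside $L$ as the modules $(\mathcal O_{L,c}:JI')_L$ and $(JI':\mathcal O_{L,c})_L$ respectively — exactly as in the identification displayed before Corollary \ref{cor.of.Kani} — so that the cancellation of the $J^{\pm 1}$ factors is a bona fide equality of $\mathcal O_L$-submodules of $L$ rather than a formal manipulation. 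The remaining verifications (the monotonicity in part (1), and that $c''\mid c'$ follows from $\operatorname{End}(E')\subset\operatorname{End}(E'')$) are routine and I would not belabor them.
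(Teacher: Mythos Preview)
Your argument for part (1) is correct and is exactly the paper's proof, only written out in more detail: the paper compresses your monotonicity discussion into the single line ``follows immediately from the fact that ${\rm End}(E\times E')\subset{\rm End}(E\times E'')$.''

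For part (2), however, you are working much harder than needed, and along the way you have drifted from the target. The paper's proof is literally ``the second claim follows analogously,'' and the analogy is this: interchange the roles of the two factors. Since $E'\in\mathcal X_{D,c}$, the order $\mathcal O_D$ embeds in ${\rm End}(E\times E')={\rm End}(E'\times E)$. Now for any $E''\in{\rm Isog}^+(E)$ one has ${\rm End}(E)\subset{\rm End}(E'')$, hence ${\rm End}(E'\times E)\subset{\rm End}(E'\times E'')$ by exactly the same monotonicity you used in part (1), and the embedding of $\mathcal O_D$ passes through. That is the whole argument; no Morita equivalence, no conjugation by $\mathrm{diag}(1,J)$, no worry about non-principal ideal classes.

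Your conjugation approach is not wrong as an idea --- it would indeed show that $\mathcal X_{D,c}$ is stable under the ${\rm Id}(\mathcal O_{L,c})/\!\simeq$-action --- but note that you have set up the reverse inclusion with the wrong base curve: you write ``one must show $\mathcal O_D$ embeds in $\operatorname{End}(E\times E'')$,'' which is the condition for $E''\in\mathcal X_{D,c}$, whereas $\mathcal X_{D,c'}$ is defined relative to the base curve $E'$ (with ${\rm End}(E')\simeq\mathcal O_{L,c'}$), so the relevant product is $E'\times E''$. Once you fix that, the simple inclusion argument above does the job immediately and the elaborate transport-of-structure machinery, together with the obstacle you yourself flag, becomes unnecessary.
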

\begin{proof}
 The first claim follows immediately from the fact that ${\rm
  End}(E\times E')\subset{\rm  End}(E\times E'')$. The second claim follows analogously.qed

\end{proof}
\begin{remark}
 Assume we know the set $\Gamma\subset\mathbb N$ of numbers $c'$ such that
  there exists  $E'\in\mathcal X_{D,c}$ with ${\rm End}(E')\simeq\mathcal
  O_{L,c'}$ or equivalently the maximal elements of $\Gamma$ (under the  partial
  ordering given by  divisibility). One can calculate the cardinality of
  ${\mathcal A}_{D,c}$ using Proposition \ref{prop.Kani}.
\end{remark}
\begin{proposition}
 The group ${\rm Id}({\mathcal O}_{L,c})/\simeq$ acts on $\mathcal A_{D,c}$.
 This action restricted to the Picard group ${\rm Pic}({\mathcal O}_{L,c})$ is given by
  \[([I],E\times E')\mapsto  E/E[I]\times E'\simeq  E\times   E'/E'[\mathcal
  O_{L,c'} I].\] 
\end{proposition}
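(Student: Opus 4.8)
The plan is to transport the principal homogeneous space structure of ${\rm Isog}^+(E)$ from Proposition~\ref{kani:bijection} to $\mathcal A_{D,c}$ along the bijections already set up. By Corollary~\ref{cor.of.Kani}, writing $A\simeq E\times E'$ and sending $A$ to $E'$ identifies $\mathcal A_{D,c}$ with the subset $\mathcal X_{D,c}\subseteq{\rm Isog}^+(E)$, so it suffices to let ${\rm Id}({\mathcal O}_{L,c})/\!\simeq$ act on $\mathcal X_{D,c}$. On the ambient set ${\rm Isog}^+(E)$ the group acts via $I_E^+$, and, by the discussion preceding Proposition~\ref{prop.Kani}, this action is given on a curve $E'$ with ${\rm End}(E')\simeq{\mathcal O}_{L,c'}$ by $(J,E')\mapsto E'/E'[{\mathcal O}_{L,c'}J]$, where ${\rm Pic}({\mathcal O}_{L,c})\subseteq{\rm Id}({\mathcal O}_{L,c})/\!\simeq$ acting on $E'$ preserves the level $c'$. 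I would therefore define, for $A\simeq E\times E'$ and $[I]\in{\rm Id}({\mathcal O}_{L,c})/\!\simeq$, the abelian surface $[I]\cdot A:=E\times\bigl([I]\cdot E'\bigr)$.

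The point that requires proof is that this stays inside $\mathcal A_{D,c}$, i.e.\ that $[I]\cdot E'$ lies again in $\mathcal X_{D,c}$; this is where Proposition~\ref{prop.Kani} does the work. Part~(1) says that enlarging the CM order of a curve in $\mathcal X_{D,c}$ keeps it in $\mathcal X_{D,c}$, part~(2) identifies $\mathcal X_{D,c}$ once a given level is reached, and together with the explicit form of the action these yield the stability of $\mathcal X_{D,c}$. Granting this, the rest is formal: the isomorphism class of $[I]\cdot E'$ depends only on $[I]$ and on that of $E'$, which by Corollary~\ref{cor.of.Kani} is determined by $A$, so $[I]\cdot A$ does not depend on the chosen splitting $A\simeq E\times E'$; and the identity and associativity axioms of the action descend from the torsor structure on ${\rm Isog}^+(E)$.

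It then remains only to make the restriction to ${\rm Pic}({\mathcal O}_{L,c})$ explicit. For $I\in{\rm Pic}({\mathcal O}_{L,c})$ the definition above reads $[I]\cdot(E\times E')=E\times\bigl(E'/E'[{\mathcal O}_{L,c'}I]\bigr)$, which is the second expression in the statement; that this is an action follows from ${\mathcal O}_{L,c'}I\cdot{\mathcal O}_{L,c'}J={\mathcal O}_{L,c'}IJ$ for invertible ideals $I,J$. For the first expression I would use the classification of products of CM elliptic curves from \cite{Kani:CMProd}: such a product is determined up to isomorphism by the conductors of its two factors and its Steinitz class, and writing $E=\mathbb{C}/{\mathcal O}_{L,c}$, $E'=\mathbb{C}/{\mathfrak b}$ one checks that $E/E[I]\times E'$ and $E\times E'/E'[{\mathcal O}_{L,c'}I]$ have the same pair of conductors $\{c,c'\}$ and the same Steinitz class $[I^{-1}{\mathfrak b}]$. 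Hence $E/E[I]\times E'\simeq E\times E'/E'[{\mathcal O}_{L,c'}I]$, which is the remaining identification in the statement.

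The hard part is the well-definedness step, that is, the stability of the (in general proper) subset $\mathcal X_{D,c}\subseteq{\rm Isog}^+(E)$ under the whole group ${\rm Id}({\mathcal O}_{L,c})/\!\simeq$: one must rule out that applying an arbitrary ideal class to $E\times E'$ destroys the embeddability of ${\mathcal O}_D$ into the endomorphism ring. Everything else---the explicit ${\rm Pic}$-formula, the Steinitz-type identification, and the action axioms---is routine once this is settled, so in writing the proof I would focus on extracting the stability statement cleanly from Proposition~\ref{prop.Kani}.
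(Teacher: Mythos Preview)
Your approach is essentially the same as the paper's: both define the action by transporting along the bijection $\mathcal A_{D,c}\simeq\mathcal X_{D,c}$ of Corollary~\ref{cor.of.Kani}, appeal to Proposition~\ref{prop.Kani} for the stability of $\mathcal X_{D,c}$, and then invoke Kani's classification of products of CM elliptic curves for the displayed isomorphism. The only cosmetic difference is that the paper cites \cite[Prop.~65]{Kani:CMProd} directly---reducing the isomorphism $E/E[I]\times E'\simeq E\times E'/E'[\mathcal O_{L,c'}I]$ to the ideal condition $II'\simeq\mathcal O_{L,c'}II'$, which is immediate since $I$ is invertible and $I'$ is already an $\mathcal O_{L,c'}$-module---whereas you phrase the same comparison via conductors and the Steinitz class; these are two packagings of the same result in \cite{Kani:CMProd}.
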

\begin{proof}
  The action is given via the isomorphism $\mathcal A_{D,c}\simeq\mathcal X_{D,c}$ and 
  Lemma \ref{prop.Kani}.\\
  By \cite[Prop.\ 65]{Kani:CMProd} there exists an isomorphism
  \[E/E[I]\times E'\simeq  E\times  E'/E'[\mathcal  O_{L,c'} I]\]
  if and only if $I I'\simeq {\mathcal O}_{L,c'} I I'$, where $I'$ is an
  ideal such that $E'\simeq E/E[I']$ holds. This is obviously satisfied.
\end{proof}
Assume we are given an abelian variety $A\simeq E\times E'$ of QM-type $\psi_{\rm QM}\colon
\mathcal O_D\hookrightarrow {\rm End}(E\times E')$. In this context we give
the isogeny $\psi_e\colon A\rightarrow E_e\times E_{\overline e}$ more
explicitly. In the following we always identify $D\otimes_{\mathbb Q} L$ with
${\rm End}^0(E\times E')$ via $\psi_{\rm QM}$. Hence we can write
$e=x_1\otimes 1+ x_2\otimes \alpha\in D\otimes_{\mathbb Q} L$ for
the idempotent corresponding to the projection to the first factor. The
following identities hold:
\begin{align}
 {\rm tr}_{D/\mathbb Q}(x_1)&=1, & {\rm tr}_{D/\mathbb Q}(x_2)&=0,\\
  {\rm tr}_{D/\mathbb Q}(x_1 x_2^{-1})&=x_1x_2^{-1}-x_2^{-1}\overline{x_1}=0,
\intertext{ and, if we denote $-d=(x_1x_2^{-1})^2$,}
 {\rm n}_{D/\mathbb Q}(x_1)&=d\cdot{\rm n}_{D/\mathbb Q}(x_2).
\end{align}
Let $\iota\colon {\mathcal O}_{L,c}\rightarrow
{\mathcal O}_D$ denote the embedding corresponding to $e$ of Proposition \ref{idem},
ie.\ $\iota(\alpha)=x_1x_2^{-1}$. In order to study
$\psi_e=(e\iota(c\alpha),\overline{e c\iota(\alpha)})=(1\otimes
c\alpha,\iota(\alpha)\otimes 1)$ of Theorem \ref{haupt} we study the projection of
$\iota(\alpha)\otimes 1$ to the first and second factor of $E\times E'$.
\begin{lemma}\label{ident.iota}
 The following identities hold.
\begin{align}
 \overline e\iota(\alpha)&=\overline e\iota(\alpha)\overline e=\overline e\cdot
(-1\otimes\alpha),\\
 e\iota(\alpha)e&=e\cdot(1\otimes\alpha),\\
 e\iota(\alpha)\overline e&=\iota(\alpha)(\overline{x_1}-{x_1})\otimes
1+(\overline{x_1}-{x_1})\otimes\alpha,\\
 &=(e-\overline e)\iota(\alpha)-1\otimes\alpha.
\end{align} 
\end{lemma}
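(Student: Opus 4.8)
The idea is to work entirely inside $D\otimes_{\mathbb Q}L$ using the two standard facts established earlier: the idempotent $e=x_1\otimes1+x_2\otimes\alpha$ satisfies $e^2=e$ and $\overline e=1-e$, and the optimal embedding $\iota$ of Proposition \ref{idem} is characterized by $\iota(\alpha)=x_1x_2^{-1}$, with $\alpha^2=-d$ so that $\iota(\alpha)^2=-d$ as well. From $e^2=e$ with $e=x_1\otimes1+x_2\otimes\alpha$ one recovers, exactly as in the proof of Lemma \ref{idem}, the identities $x_1^2+x_2^2(-d)=x_1$ and $x_1x_2+x_2x_1=x_2$ (note $\delta=-d$ here). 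The second relation rewritten is $x_1x_2^{-1}=x_2^{-1}(1-x_1)=x_2^{-1}\overline{x_1}\cdot(\text{after identifying }1-x_1\text{ with the relevant piece})$; more precisely $x_1x_2^{-1}+x_2^{-1}x_1=x_2^{-1}x_2 x_2^{-1}\cdot(\ldots)$, but the cleanest form to carry around is simply $x_2x_1=x_2-x_1x_2$, i.e. $x_1x_2^{-1}=x_2^{-1}-x_2^{-1}x_1x_2x_2^{-1}$; I will instead just use $\iota(\alpha)=x_1x_2^{-1}$ together with $\overline{\iota(\alpha)}=-\iota(\alpha)$ (since $\mathrm{tr}_{D/\mathbb Q}(\iota(\alpha))=0$) and the displayed trace identities preceding the lemma.

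First I would prove the first two displayed identities. Since $\overline e=1-e$ is an idempotent with $\overline e\,e=e\,\overline e=0$, we have $\overline e\,\iota(\alpha)\,\overline e=\overline e(x_1x_2^{-1})\overline e$; expanding $x_1x_2^{-1}=(x_1\otimes1)(x_2\otimes1)^{-1}$ and using $\overline e\,(x_1\otimes 1)=\overline e(e-x_2\otimes\alpha)=-\overline e(x_2\otimes\alpha)$ together with $\overline e\,(x_2\otimes 1)$ reduces everything to $\overline e\cdot(-1\otimes\alpha)$ after cancelling $x_2\otimes1$ against its inverse; the outer $\overline e$ on the right is then absorbed because $(1\otimes\alpha)$ commutes with $\overline e$ only up to the relations, so I must check $\overline e(1\otimes\alpha)\overline e=\overline e(1\otimes\alpha)$, which holds because $1\otimes\alpha$ lies in the commutant of $e$ iff ... — this is the one spot needing care, and I would verify it directly from $x_1x_2+x_2x_1=x_2$. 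The identity $e\,\iota(\alpha)\,e=e\cdot(1\otimes\alpha)$ is the mirror computation with $e(x_1\otimes1)=e(e-x_2\otimes\alpha)(\ldots)$ giving $e(x_1\otimes1)x_2^{-1}=e\cdot(1\otimes\alpha)\cdot(\text{unit})$ and simplifying.

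For the third and fourth displays, I would write $e\,\iota(\alpha)\,\overline e=e\,\iota(\alpha)(1-e)=e\,\iota(\alpha)-e\,\iota(\alpha)\,e=e\,\iota(\alpha)-e\cdot(1\otimes\alpha)$ using the second display, then substitute $e=x_1\otimes1+x_2\otimes\alpha$ and $\iota(\alpha)=x_1x_2^{-1}$ and collect the $\otimes1$ and $\otimes\alpha$ components; using $x_1^2+x_2^2(-d)=x_1$ and the trace relation $x_1x_2^{-1}-x_2^{-1}\overline{x_1}=0$ this collapses to $\iota(\alpha)(\overline{x_1}-x_1)\otimes1+(\overline{x_1}-x_1)\otimes\alpha$, which is the stated line. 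The last line follows by adding and subtracting: $(e-\overline e)\iota(\alpha)=2e\,\iota(\alpha)-\iota(\alpha)$, and comparing with the sum of the first and third displays (which together give $\iota(\alpha)=e\iota(\alpha)e+e\iota(\alpha)\overline e+\overline e\iota(\alpha)e+\overline e\iota(\alpha)\overline e$) yields $e\iota(\alpha)\overline e=(e-\overline e)\iota(\alpha)-1\otimes\alpha$ after cancelling the $e\iota(\alpha)e$ and $\overline e\iota(\alpha)\overline e$ terms against $\pm(1\otimes\alpha)$ coming from displays one and two. The main obstacle is purely bookkeeping: keeping track of which of $x_1,x_2$ sit on which side of the noncommutative products when peeling $e$ apart, so the safest route is to fix the convention $e\iota(\alpha)=(1\otimes c\alpha,\iota(\alpha)\otimes1)$ from the text, reduce every expression to the basis $\{1\otimes1,\,1\otimes\alpha,\,x_2^{-1}\otimes1,\,x_2^{-1}\otimes\alpha\}$ of $D\otimes_{\mathbb Q}L$, and match coefficients.
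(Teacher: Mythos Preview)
Your strategy differs from the paper's. The paper does a straight expansion: it writes $\overline e=\overline{x_1}\otimes1-x_2\otimes\alpha$, $\iota(\alpha)=x_1x_2^{-1}$, $e=x_1\otimes1+x_2\otimes\alpha$, multiplies the three factors out term by term, and then simplifies using ${\rm n}(x_1)=d\,{\rm n}(x_2)$ and $x_2x_1x_2^{-1}=\overline{x_1}$. Your device $\overline e(x_1\otimes1)=\overline e(e-x_2\otimes\alpha)=-\overline e(x_2\otimes\alpha)$ for the first identity is genuinely slicker: it yields $\overline e\,\iota(\alpha)=\overline e\cdot(-1\otimes\alpha)$ in one line with no expansion. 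Your derivation of the last identity, via $(e-\overline e)\iota(\alpha)=e\iota(\alpha)e+e\iota(\alpha)\overline e-\overline e\iota(\alpha)\overline e$, is exactly what the paper does.

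Two places where your write-up needs correction. First, the ``spot needing care'' is a non-issue: $1\otimes\alpha$ lies in $1\otimes L$, which is the centre of $D\otimes_K L$ by construction of the tensor product, so $\overline e(1\otimes\alpha)\overline e=\overline e^{\,2}(1\otimes\alpha)=\overline e(1\otimes\alpha)$ is immediate and requires no appeal to $x_1x_2+x_2x_1=x_2$. Second, your ``mirror computation'' for $e\iota(\alpha)e$ is not a proof; the line ``$e(x_1\otimes1)x_2^{-1}=e\cdot(1\otimes\alpha)\cdot(\text{unit})$'' is false as stated, since $e\iota(\alpha)\ne e(1\otimes\alpha)$ in general (indeed $e\iota(\alpha)\overline e\ne0$). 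The clean fix in your framework is: from the first identity $\overline e\iota(\alpha)e=0$, so $e\iota(\alpha)e=\iota(\alpha)e$; then compute directly $\iota(\alpha)e=(x_1x_2^{-1}x_1)\otimes1+x_1\otimes\alpha$ and use $(x_1x_2^{-1})^2=-d$ to get $x_1x_2^{-1}x_1=-dx_2$, whence $\iota(\alpha)e=-dx_2\otimes1+x_1\otimes\alpha=e(1\otimes\alpha)$. With identity two repaired, your plan for the third identity via $e\iota(\alpha)\overline e=e\iota(\alpha)-e\iota(\alpha)e$ goes through.
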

\begin{proof}
We calculate:
\begin{align*}
 \overline e\iota(\alpha)e&=(\overline x_1\otimes 1-x_2\otimes \alpha)\cdot
(x_1 x_2^{-1})\cdot(x_1\otimes 1+x_2\otimes \alpha)\\
 &=({\rm n}_{D/\mathbb Q}(x_1) x_2^{-1} x_1+dx_2x_1)\otimes 1
  +({\rm n}_{D/\mathbb Q}(x_1)-x_2x_1x_2^{-1}x_1)\otimes \alpha\\
 &=0,\\
 \overline e\iota(\alpha)&=\overline e\iota(\alpha)\overline e
  =(\overline x_1\otimes 1-x_2\otimes \alpha)\cdot
(x_1 x_2^{-1})\cdot(\overline{x_1}\otimes 1-x_2\otimes \alpha)\\
 &=({\rm n}_{D/\mathbb Q}(x_1) x_2^{-1} \overline{x_1}-dx_2x_1)\otimes 1
  +(-{\rm n}_{D/\mathbb Q}(x_1)-x_2x_1x_2^{-1}\overline{x_1})\otimes
\alpha\\
 &=-d x_2\otimes 1-\overline{x_1}\otimes \alpha
 =\overline e\cdot (-1\otimes\alpha).
\end{align*}
Analogously,
\[
e\iota(\alpha)e=-dx_2\otimes 1+x_1\otimes\alpha=e\cdot(1\otimes\alpha).
\]
Furthermore,
\begin{align*}
 e\iota(\alpha)\overline e&=(x_1\otimes 1+x_2\otimes \alpha) \cdot
(x_1 x_2^{-1})\cdot(\overline x_1\otimes 1-x_2\otimes \alpha)\\
 &=(x_1^2x_2^{-1}\overline{x_1}+dx_2x_1)\otimes 1
  +(-x_1^2+x_2x_1x_2^{-1}\overline{x_1})\otimes \alpha,\\
 &=(x_1x_2^{-1}\cdot(1-2x_1))\otimes 1+(1-2x_1)\otimes\alpha\\
 &=\iota(\alpha)(\overline{x_1}-{x_1})\otimes
1+(\overline{x_1}-{x_1})\otimes\alpha.
\end{align*}
We calculate
\begin{align*}
 (e-\overline e)\iota(\alpha)&=e\iota(\alpha)e+e\iota(\alpha)\overline
e-\overline e\iota(\alpha)\overline e\\
&=e(1\otimes\alpha)+e\iota(\alpha)\overline e-\overline e(-1\otimes \alpha)\\
&=1\otimes\alpha+e\iota(\alpha)\overline e.
\end{align*}
\end{proof}
Using Lemma \ref{ident.iota} and Theorem \ref{thm:Kani} we can give $\psi_e\colon
A\rightarrow E_e\times E_{\overline e}$ of Theorem \ref{haupt} more explicitly.
\begin{corollary}[ {of Theorem \ref{haupt}} ]\label{can.psi}
 In the notation of Corollary \ref{case:ell}. Assume $A\simeq E\times E'$. Then the
isogeny 
\[ \psi_e\colon A\rightarrow E_e\times E_{\overline e} \]
with ${\rm deg}(\psi_e)\mid (4c^2\cdot d)^2$ which is asserted to exist in Corollary
\ref{case:ell} can be given as (projection onto the image) of
\[ (c\alpha|_E,\gamma,c\alpha|_{E'})\colon E\times E'\rightarrow
E\times E'\times E',\]
where $\gamma$ is the morphism $\gamma\colon E\rightarrow E'$ induced by
$\psi_{\rm QM}(e\iota(c\alpha)\overline e)\in {\rm End}(E\times E')$.
Furthermore, we conclude that $E_e\simeq{\rm Img}(c\alpha|_E,\gamma)(E)$ and
$E_{\overline e}\simeq E'$.
\end{corollary}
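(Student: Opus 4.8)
The plan is to start from the isogeny $\psi_e\colon A\to E_e\times E_{\overline e}$ already produced by Theorem~\ref{haupt} (applied, as in Corollary~\ref{case:ell}, with $\alpha=2c\sqrt{-d}\in{\mathcal O}_{L,c}$), which in the present notation sends $P\mapsto(e\iota(c\alpha)(P),\overline{e\iota(c\alpha)}(P))$, and to rewrite each of its two components in the matrix description of ${\rm End}^0(A)={\rm End}^0(E\times E')$ attached to the splitting $A\simeq E\times E'$ of Theorem~\ref{thm:Kani}. Write $M:=D\otimes_{\mathbb Q}L={\rm End}^0(E\times E')$ and let $e,\ \overline e=1-e$ be the idempotents of the two factors, so that $M=eMe\oplus eM\overline e\oplus\overline eMe\oplus\overline eM\overline e$ is identified with $\left(\begin{smallmatrix}{\rm End}^0(E)&{\rm Hom}^0(E',E)\\{\rm Hom}^0(E,E')&{\rm End}^0(E')\end{smallmatrix}\right)$, exactly as recalled just before the statement; each of $e\iota(c\alpha)$ and $\overline{e\iota(c\alpha)}$ then breaks into two of these four blocks.

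The core step is to feed the identities of Lemma~\ref{ident.iota} into these blocks. By that lemma $e\iota(c\alpha)e=c\,e(1\otimes\alpha)$ acts on $E=eA$ as multiplication by $c\alpha$, which is the coordinate $c\alpha|_E$; $\overline e\iota(c\alpha)\overline e=-c\,\overline e(1\otimes\alpha)$ acts on $E'=\overline eA$ as multiplication by $-c\alpha$, which up to the automorphism $-1$ of $E'$ is the coordinate $c\alpha|_{E'}$; one of the two cross-blocks vanishes; and the surviving cross-block is, up to the identification of Hom-spaces used in the statement, precisely the morphism $\gamma$. Substituting all of this back into $P\mapsto(e\iota(c\alpha)(P),\overline{e\iota(c\alpha)}(P))$ exhibits $\psi_e$, viewed as a morphism $A\to A\times A$, with two identically-zero coordinates; deleting them leaves exactly the map $(c\alpha|_E,\gamma,c\alpha|_{E'})\colon E\times E'\to E\times E'\times E'$ followed by projection onto its image, and the divisibility ${\rm deg}(\psi_e)\mid(4c^2d)^2$ is inherited verbatim from Corollary~\ref{case:ell}.

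Finally one identifies the two factors of that image. Since $c\alpha|_{E'}$ is an isogeny of $E'$ and its value depends only on the $E'$-component of the argument, the image of $(c\alpha|_E,\gamma,c\alpha|_{E'})$ splits as ${\rm Img}(c\alpha|_E,\gamma)(E)\times E'$; matching this with the target $E_e\times E_{\overline e}$ of Theorem~\ref{haupt} gives $E_{\overline e}\simeq E'$ and $E_e\simeq{\rm Img}(c\alpha|_E,\gamma)(E)$, the latter carrying complex multiplication by at least ${\mathcal O}_{L,c}$ with ${\rm End}(E_e)\simeq{\rm End}(E_{\overline e})$ by Corollary~\ref{case:ell}. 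The only genuine obstacle is the bookkeeping of the four Hom/End blocks under the two idempotents---deciding which of $eM\overline e,\ \overline eMe$ feeds which factor, and matching the surviving cross-block with $\gamma$ (including the transpose/sign convention)---together with the verification that nothing is lost when one passes from the map into $A\times A$ to the one into $E\times E'\times E'$; the rest is the mechanical substitution of Lemma~\ref{ident.iota} into the formula of Theorem~\ref{haupt}.
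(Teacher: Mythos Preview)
Your approach is correct and follows essentially the same strategy as the paper: both feed the identities of Lemma~\ref{ident.iota} into the pair $(e\iota(c\alpha),\overline{e\iota(c\alpha)})$ defining $\psi_e$ and read off the block decomposition with respect to $A\simeq E\times E'$. The paper's proof differs only in one streamlining step: rather than decomposing each of $e\iota(c\alpha)$ and $\overline{e\iota(c\alpha)}$ separately into Peirce blocks, it first replaces them by the equivalent pair of ideal generators $(1\otimes c\alpha,\iota(c\alpha))$ and only then invokes Lemma~\ref{ident.iota}. This bypasses most of the bookkeeping you flag as the main obstacle, since $1\otimes c\alpha$ is already block-diagonal (giving $c\alpha|_E$ and $c\alpha|_{E'}$) and the single remaining off-diagonal contribution of $\iota(c\alpha)$ is $e\iota(c\alpha)\overline e$, yielding $I=(c\alpha|_E,c\alpha|_{E'},e\iota(c\alpha)\overline e)$ directly.

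One small correction to your version: with $\overline{e\iota(c\alpha)}=-\iota(c\alpha)\overline e$, only \emph{one} of the four coordinates in $A\times A$ vanishes (the $E'$-component of $e\iota(c\alpha)$); the $E$-component of $\overline{e\iota(c\alpha)}$ is $-e\iota(c\alpha)\overline e$, not zero. The three surviving coordinates are $(c\alpha P_E+\gamma'(P_{E'}),\,-\gamma'(P_{E'}),\,c\alpha P_{E'})$, and one further linear combination is needed to separate the first into $c\alpha|_E$ alone---which is exactly the paper's preliminary passage to $(1\otimes c\alpha,\iota(c\alpha))$. This is a trivial fix and does not affect the argument.
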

\begin{proof}
The isogeny in Corollary \ref{case:ell} is induced by the ${\rm End}(A)$-ideal
\begin{align*}
I&:=(\psi_{\rm QM}(e\iota(c\alpha)),\psi_{\rm QM}
(\overline{e\iota(c\alpha)}))\\
&=(\psi_{\rm QM}(1\otimes c\alpha),
\psi_{\rm QM}(\iota(c\alpha)))\\
&=(c\alpha|_E,c\alpha|_{E'}, \psi_{\rm QM}(e\iota(\alpha)\overline e))
\end{align*}
by the identities of Lemma \ref{ident.iota}. The statement follows.
\end{proof}
Finally we are interested in polarizations on QM+CM abelian surfaces. Therefore, 
let $A$ be an abelian surface of type QM+CM and $\lambda\colon A\rightarrow A^t$ 
a principal polarization. By Theorem \ref{thm:Kani} there exists an isomorphism 
$\psi\colon A\rightarrow E\times E'$, where $E,E'$ are isogenous elliptic curves. 
Hence the polarization $\lambda$ is given by
\begin{align*}
 \lambda\colon A\xrightarrow{\psi} E\times E' \xrightarrow{\tilde\lambda} E^t\times (E')^t
\xrightarrow{\psi^t} A^t,
\end{align*}
for some polarization $\tilde\lambda$ on $E_1\times E_2$. Or, put differently, we can factor
$\lambda$ as a composite
\begin{align*}
 A \xrightarrow{\beta} A\xrightarrow{\psi} E\times E'  \xrightarrow{\lambda_{\rm pol}} E^t\times (E')^t
 \xrightarrow{\psi^t} A^t \xrightarrow{\beta^t} A^t,
\end{align*}
where the middle arrow is the product polarization on $E\times E'$ and 
$\beta\colon A\rightarrow A$ is an isomorphism.\par
Theorem \ref{thm:Kani} and the above remark give an alternative approach to
\cite{BG} for describing CM points of Shimura curves corresponding to 
the quaternion algebra $D$. Furthermore this approach is not restricted to abelian surfaces in
characteristic $0$.\par
One application of Theorem \ref{thm:Kani} is given in \cite{Diss}. In the
following we
explain roughly the idea. In \cite{Diss} we study Shimura curves $C$ describing
principally polarized
abelian surfaces of type QM. Denote by $x\in C$ a CM point and by $A$ the
corresponding principally polarized abelian
surface. As $x$ is a CM point the variety $A$ is defined over a number field. We
assume that $A$ has good ordinary
reduction $\overline A$ at a prime $p\in{\mathbb N}$. We are interested in the
locus of the Shimura curve in the
formal deformation space $\mathcal M$ of $\overline A$ and in the CM points in
that locus. We denote by $F\colon \overline
A\rightarrow \overline A$ the absolute Frobenius on $\overline A$. The geometry
of the CM points $y\in C$
in $\mathcal M$ is controlled by the natural number $n$ such that $[p^n]F$ lifts
to an endomorphism of $A_y$. In
\cite[Prop.\ 2.6.7]{Diss} we show, using Theorem \ref{thm:Kani}, that lifting
$[p^n]F$ to an endomorphism on $A_y$ is
equivalent to $[{\mathcal Z}(\overline A):{\mathcal Z}(A_y)]=p^n$, where 
$\mathcal Z(\overline A)$ (resp.\ ${\mathcal Z}(A_y)$) 
denotes the center of ${\rm End}(\overline A)$ (resp.\ ${\rm End}(A_y)$).

\section*{Acknowledgments}
 This work was supported by the \emph{Ministerium f\"ur Wissenschaft, Forschung und Kunst} of the state of
Baden-W\"urttemberg (Germany) and the European Research Council [ERC-2010-StG to M.\ M\"oller].\\
I want to thank Irene I.\ Bouw for many helpful comments on this work.


\begin{thebibliography}{0}
%
\bibitem{BG}
P. Bayer and J. Gu{\`a}rdia,
On equations defining fake elliptic curves,
\textit{J. Th\'eor. Nombres Bordeaux} {\bf 17}(1), (2005) 57--67.
%
\bibitem{Jacobson:Algebra2}
N. Jacobson,
\textit{Basic Algebra II}
(W. H. Freeman And Company, 1989)
%
\bibitem{Kani:CMProd}
E. Kani,
Products of {CM} Elliptic Curves,
\textit{Collectanea Math.} {\bf  62},  (2011) 297--339. 
%
\bibitem{Kohel:Diss}
D. Kohel,
Endomorphism rings of elliptic curves over finite fields
(PhD Thesis), University of California, Berkeley (1996)
%
\bibitem{Knus:QuadHermitForms}
M.-A. Knus,
\textit{Quadratic and {H}ermitian forms over rings}
(Springer-Verlag, 1991)
%
\bibitem{Mori:Expansion}
A. Mori,
An Expansion Principle for Quaternionic Modular Forms
(preprint), arXiv:math/0406388v2 (2009)
%
\bibitem{Mumford:AV} 
D. Mumford,
\textit{Abelian Varieties}
(Oxford University Press, 1974)
%
\bibitem{Oort:Endomorphism-Algebras}
F. Oort,
Endomorphism algebras of abelian varieties.
\textit{Algebraic geometry and commutative algebra Vol.\ II}
(Kinokuniya, 1988) 469--502
%
\bibitem{Oort:CMAV}
F. Oort,
The isogeny class of a {CM}-type abelian variety is defined %
over a finite extension of the prime field,
\textit{J. Pure Appl. Algebra} {\bf 3}, (1973) 399--408.
%
\bibitem{Rotger:ForgetfulMaps}
V. Rotger,
Modular Shimura varieties and forgetful maps,
\textit{Trans. Amer. Math. Soc.} {\bf 356}, (2004) 1535--1550.
%
\bibitem{Schoen}
C. Schoen,
Produkte Abelscher Variet\"aten und Moduln \"uber Ordnungen,
\textit{J. Reine Angew. Math.} {\bf 429}, (1992) 115--124.
%
\bibitem{Shimura:AnalyticFamilies}
G. Shimura,
On analytic families of polarized abelian varieties and %
automorphic functions,
\textit{Ann. of Math.} {\bf 78}, (1963) 149--192.
%
\bibitem{Shimura:ClassFields}
G. Shimura,
Construction of class fields and zeta functions of algebraic curves,
\textit{Ann. of Math.} {\bf 85}, (1967) 58--159.
%
\bibitem{Shimura:CM}
G. Shimura,
\textit{Abelian varieties with complex multiplication and modular%
functions}
(Princeton University Press, 1998)
%
\bibitem{Diss}
D. Ufer,
Shimura-Kurven, Endomorphismen und $q$-Parameter
(PhD Thesis), Universit\"at Ulm (2010), urn:nbn:de:bsz:289-vts-73741
%
\bibitem{Vigneras:Quaternions}
M.-F. Vign{\'e}ras,
\textit{Arithm\'etique des alg\`ebres de quaternions}
(Springer, 1980)
\end{thebibliography}
\end{document}